\begin{document}
\newtheorem{definition}{Definition}[section]
\newtheorem{theorem}[definition]{Theorem}
\newtheorem{lemma}[definition]{Lemma}
\newtheorem{proposition}[definition]{Proposition}
\newtheorem{examples}[definition]{Examples}
\newtheorem{corollary}[definition]{Corollary}
\def\square{\Box}
\newtheorem{remark}[definition]{Remark}
\newtheorem{remarks}[definition]{Remarks}
\newtheorem{exercise}[definition]{Exercise}
\newtheorem{example}[definition]{Example}
\newtheorem{observation}[definition]{Observation}
\newtheorem{observations}[definition]{Observations}
\newtheorem{algorithm}[definition]{Algorithm}
\newtheorem{criterion}[definition]{Criterion}
\newtheorem{algcrit}[definition]{Algorithm and criterion}

\newenvironment{prf}[1]{\trivlist
\item[\hskip \labelsep{\it
#1.\hspace*{.3em}}]}{~\hspace{\fill}~$\square$\endtrivlist}
\newenvironment{proof}{\begin{prf}{Proof}}{\end{prf}}

\title{A Riemann--Hilbert approach to Painlev\'e IV}
\author{ Marius van der Put and Jaap Top}
\date{April 2012}
\maketitle

\begin{abstract} 
This paper applies methods of Van~der~Put and Van~der~Put-Saito to the fourth Painlev\'e equation.
One obtains a Riemann--Hilbert correspondence between moduli spaces of rank two connections on $\mathbb{P}^1$
and moduli spaces for the monodromy data. The moduli spaces for these connections are identified with Okamoto--Painlev\'e 
varieties and the Painlev\'e property follows. For an explicit computation of the full group of B\"acklund transformations, 
rank three connections on $\mathbb{P}^1$ are introduced, inspired by the symmetric form for ${\rm PIV}$ as was studied by M.~Noumi 
and Y.~Yamada.
\footnote{
 MSC2000: 14D20,14D22,34M55 {\it keywords}: Moduli space for linear connections, Irregular singularities, 
Stokes matrices, Monodromy spaces, Isomonodromic deformations,  Painlev\'e equations}

\end{abstract}

\section*{Introduction}  
In this paper we apply the methods of \cite{vdP-Sa, vdP1, vdP2} to the fourth Painlev\'e equation.  We refer only to a few items of the extensive literature on Okamoto--Painlev\'e varieties. More details on Stokes matrices and the analytic classification of singularities can be found in \cite{vdP-Si}. 

The Riemann--Hilbert approach to the Painlev\'e equation PIV consists of the construction of a moduli space
$\mathcal{M}$ of connections on the projective line and a moduli space $\mathcal{R}$ for the monodromy data.
The Riemann--Hilbert morphism $RH:\mathcal{M}\rightarrow   \mathcal{R}$ assigns to a connection its
monodromy data. The fibres of $RH$, i.e., the isomonodromic families in $\mathcal{M}$, are parametrized
by $t\in T=\mathbb{C}$.   The explicit form of the fibres produces the solutions of PIV. 

$RH^{ext}:\mathcal{M}^+(\theta_0,\theta_\infty)\rightarrow  
 \mathcal{R}^+(\theta_0,\theta_\infty)\times T$, the extended Riemann--Hilbert morphism, is an analytic isomorphism between rather subtle moduli spaces 
 $\mathcal{M}^+(\theta_0,\theta_\infty)$ and $\mathcal{R}^+(\theta_0,\theta_\infty)\times T$, depending on parameters  $\theta_0,\theta_\infty$ and provided with a level structure (or parabolic structure). The Painlev\'e Property for PIV with parameters $\theta_0,\theta_\infty$ follows from this as well as the identification of $\mathcal{M}^+(\theta_0,\theta_\infty)$ with an Okamoto-Painlev\'e variety. 
Formulas for B\"acklund transformations, rational  and Riccati solutions  for PIV are derived.    

 The construction of $\mathcal{M}$ involves the choice of a set $\bf S$ of differential modules over 
 $\mathbb{C}(z)$. In the first part of this paper the `classical' choice for $\bf S$ is treated.
The second choice for $\bf S$ is inspired by the symmetric form for PIV \cite{No,No-Y}, studied by
M.~Noumi and Y.~Yamada. This leads to
a different construction of $\mathcal{M}, \mathcal{R}$ and  Okamoto--Painlev\'e varieties, treated in  Section 3.

\section{The classical choice for  $\bf S$ and  $\mathcal{R}$}

Let $\bf S$ be the set of the isomorphy classes of the differential modules $(M,\delta_M)$ over $\mathbb{C}(z)$
(with  $\delta _M(fm)=(z\frac{d}{dz}f)m+f\delta _M (m)$) having the properties:\\ $\dim M=2$; $\Lambda ^2M$ is trivial; $0,\infty$ are the singular points and the Katz
invariants are $r(0)=0,\ r(\infty )=2$. The variable $z$ is normalized such that the (generalized)
eigenvalues at $\infty$ are $\pm (z^2+\frac{t}{2}z)$. Finally, we exclude the case that $M$ is a direct sum of two
proper submodules since this situation does not produce solutions for PIV.

\bigskip

The {\it monodromy data} at $\infty$ are given by the matrices
 \[{\alpha \ \ 0\choose 0\ \ \frac{1}{\alpha} },\  {1\ \ 0\choose a_1\ \ 1 },\  {1\ \ a_2\choose 0\ \ 1 },\  
 {1\ \ 0\choose a_3 \ \ 1 },\  {1\ \ a_4\choose 0\ \ 1 } \]
with respect to a basis of the symbolic solution space $V(\infty )$ at $z=\infty$ corresponding to
the direct sum expression $V(\infty )=V_{z^2+\frac{t}{2}z}\oplus V_{-(z^2+\frac{t}{2}z)}=\mathbb{C}e_1\oplus \mathbb{C}e_2$. The first matrix is the formal monodromy and the others are the 
four Stokes matrices. The topological monodromy $top_\infty$ at $z=\infty$ (which equals the topological monodromy at $z=0$) is the product of these
matrices in this order. Further we exclude the case $a_1=a_2=a_3=a_4=0$, since this corresponds to the
direct sum situation. The monodromy data form a variety 
$\mathcal{A}:=\mathbb{C}^*\times (\mathbb{C}^4\setminus \{(0,0,0,0)\})$.

The base change $e_1, e_2\mapsto \lambda e_1,\lambda ^{-1}e_2$ induces an
action of $\mathbb{G}_m$ on $\mathcal{A}$. The {\it monodromy space} $\mathcal{R}$ is the quotient
$\mathcal{A}/\mathbb{G}_m$. This quotient can be obtained by gluing the subspaces 
$\mathcal{R}_j, j=1,\dots ,4$ of $\mathcal{A}$, defined by $a_j=1$.  \\

We observe (see \cite{vdP-Sa}, Theorem 1.7) that the map ${\bf S}\rightarrow \mathcal{R}\times T$, which maps a module in $\bf S$
to its monodromy data and value of $t\in T=\mathbb{C}$, is bijective.\\

The {\it parameter space} is $\mathcal{P}=\mathbb{C}\times \mathbb{C}^*$ and $\mathcal{R}\rightarrow \mathcal{P}$ maps an element of $\mathcal{R}$ to $(trace(top_\infty),\alpha)$. The fibre above $(s,\alpha)$ is denoted
by $\mathcal{R}[s,\alpha]$. This fibre is a smooth, connected surface for $s\neq \pm 2$. The fibre
$\mathcal{R}[2,\alpha]$ has {\it one singular point} and this point corresponds to $top_\infty={1\ 0\choose 0 \ 1 }$.
Similarly, $\mathcal{R}[-2,\alpha]$ has {\it one singular point} corresponding to $top_\infty =-{1\ 0\choose 0 \ 1}$.\\

The singular points  are the reason for introducing a {\it level structure} (or `parabolic  structure' in the terminology of \cite{Ina06,IIS1,IIS2,IISA}).
For the monodromy data this is a line $L\subset V(\infty)$ which is invariant under $top_\infty$. The new monodromy space is denoted by $\mathcal{R}^+$. For a module $M$ in $\bf S$ the level structure is a
1-dimensional submodule $N$ of $\mathbb{C}((z))\otimes M$. The submodule $N$ corresponds to an eigenvector
of the topological monodromy $top_0$ at $z=0$ (which is equal to $top_\infty$). The new set is denoted by ${\bf S}^+$. For the parameter space, the level structure
is the introduction of an eigenvalue $\beta$ of $top_\infty$. The new parameter space
$\mathcal{P}^+=\mathbb{C}^*\times \mathbb{C}^*$ maps to $\mathcal{P}$ by $(\beta ,\alpha)\mapsto
(\beta +\beta ^{-1},\alpha)$.  \\

The fibres of $\mathcal{R}^+\rightarrow \mathcal{P}^+$ are denoted by $\mathcal{R}^+(\beta ,\alpha)$.
The morphism  $\mathcal{R}^+(\beta ,\alpha)\rightarrow \mathcal{R}[\beta +\beta ^{-1},\alpha ]$ is an isomorphism for $\beta \neq \pm 1$. A computation shows that

\begin{lemma} $\mathcal{R}^+(\pm 1 ,\alpha)\rightarrow \mathcal{R}[\pm 2,\alpha ]$ is the  minimal resolution.
\end{lemma}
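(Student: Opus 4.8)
The plan is to determine the local analytic structure of $\mathcal{R}[\pm 2,\alpha]$ at its single singular point and to recognise the forgetful map $\mathcal{R}^+(\pm 1,\alpha)\to\mathcal{R}[\pm 2,\alpha]$ as the Springer resolution of a nilpotent cone. I treat $s=2$, $\beta=1$; the case $s=-2$, $\beta=-1$ is identical after replacing $top_\infty-I$ by $top_\infty+I$. First I would pass to the affine chart $\mathcal{R}_1=\{a_1=1\}$, where $(\alpha,a_2,a_3,a_4)$ are coordinates and, for fixed $\alpha$, the surface $\mathcal{R}[2,\alpha]$ is the hypersurface $\{f=0\}$ with $f:=\mathrm{trace}(top_\infty)-2$, the cubic obtained by multiplying out the five monodromy matrices. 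A direct computation locates the unique point with $top_\infty=I$ at $p_0=(a_2,a_3,a_4)=(\alpha-1,-\alpha^{-1},\alpha(1-\alpha))$ and confirms that $f(p_0)=0$ and $df(p_0)=0$.

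The computational heart rests on one identity. Since $top_\infty$ is a product of matrices of determinant $1$, we have $\det(top_\infty)\equiv 1$ on all of $\mathcal{R}_1$, whence
\[
\det(top_\infty-I)=\chi_{top_\infty}(1)=1-\mathrm{trace}(top_\infty)+1=-f .
\]
Thus $\mathcal{R}[2,\alpha]=\{f=0\}=\{\det(top_\infty-I)=0\}$ is exactly the preimage, under the map $\Phi:(a_2,a_3,a_4)\mapsto(N_{11},N_{12},N_{21})$ with $N:=top_\infty-I$, of the nilpotent cone $\mathcal{N}=\{N_{11}^2+N_{12}N_{21}=0\}$ of trace-zero $2\times 2$ matrices. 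I would then compute the Jacobian of $\Phi$ at $p_0$ and check that its determinant equals $\alpha^{-1}\neq 0$; hence $\Phi$ is a local analytic isomorphism of the ambient $\mathbb{C}^3$'s, and it carries the germ $(\mathcal{R}[2,\alpha],p_0)$ isomorphically onto $(\mathcal{N},0)$. In particular the singularity is the ordinary double point $A_1$ (equivalently, the quadratic part of $f$ at $p_0$ is nondegenerate, its symmetric matrix having determinant $-\alpha/4$).

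Finally I would transport the level structure through $\Phi$. On $\mathcal{R}[2,\alpha]$ the matrix $top_\infty$ has characteristic polynomial $(\lambda-1)^2$, so $N$ is nilpotent and the eigenline $L$ of eigenvalue $\beta=1$ is precisely $\ker N$. Under the identification $(\mathcal{R}[2,\alpha],p_0)\cong(\mathcal{N},0)$ the map $\mathcal{R}^+(1,\alpha)\to\mathcal{R}[2,\alpha]$ therefore becomes the Springer resolution $\widetilde{\mathcal{N}}=\{(N,\ell)\in\mathcal{N}\times\mathbb{P}^1:\ell\subseteq\ker N\}\to\mathcal{N}$: over $N\neq 0$ the line $\ell=\ker N$ is forced, while over $N=0$ the fibre is all of $\mathbb{P}^1$. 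This $\widetilde{\mathcal{N}}\cong T^\ast\mathbb{P}^1$ is smooth and is the minimal resolution of $A_1$, its exceptional divisor being a single $(-2)$-curve. Away from $p_0$ the invariant eigenline is unique, so the map is an isomorphism there. Hence $\mathcal{R}^+(1,\alpha)\to\mathcal{R}[2,\alpha]$ is a resolution whose only exceptional fibre is this irreducible $\mathbb{P}^1$, i.e. the minimal resolution.

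I expect the main obstacle to be purely organisational: writing out $top_\infty$ explicitly, pinning down $p_0$, and verifying that the Jacobian of $\Phi$ is invertible at $p_0$. Once that single determinant is seen to be nonzero, the identity $\det(top_\infty-I)=-f$ turns the $A_1$ identification and the matching of the level structure with $\ker N$ into formal consequences, and the minimality of the resolution follows from the standard description of $\widetilde{\mathcal{N}}\to\mathcal{N}$. The case $s=-2$ requires only rerunning the same computation with $N=top_\infty+I$ and eigenvalue $\beta=-1$.
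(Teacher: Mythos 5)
Your strategy is sound, and since the paper's entire justification is the phrase ``A computation shows that'', your packaging of that computation via the nilpotent cone and the Springer resolution is a legitimate and rather clean route; moreover your key numbers check out ($p_0=(\alpha-1,-\alpha^{-1},\alpha(1-\alpha))$ is the correct point, and the Jacobian of $\Phi$ at $p_0$ does have determinant $\alpha^{-1}$). However, two of your stated steps are wrong as written. The first is repairable: the equality $\{f=0\}=\Phi^{-1}(\mathcal{N})$ is false, because $\det(top_\infty-I)$ involves $N_{22}$, while the cone equation substitutes $-N_{11}$ for it; on the chart one has the identity $N_{11}^2+N_{12}N_{21}=f\cdot(N_{11}+1)$, so $\Phi^{-1}(\mathcal{N})=\{f=0\}\cup\{a_2a_3=-1\}$, strictly larger than $\{f=0\}$. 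Your argument survives because $N_{11}+1$ takes the value $1$ at $p_0$, hence is a unit in the local ring, so the \emph{germs} of $\{f=0\}$ and $\Phi^{-1}(\mathcal{N})$ at $p_0$ do coincide; but exhibiting this unit factor is exactly what a correct proof must do. (Minor: the determinant of the symmetric matrix of the quadratic part of $f$ is $-1/(4\alpha^2)$, not $-\alpha/4$; only its nonvanishing matters.)

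The second issue is a genuine gap. Your claim that $p_0$ is \emph{the unique} point with $top_\infty=I$ fails precisely when $\alpha=1$: the equations for $top_\infty=I$ reduce to $a_3=-a_1$, $a_4=-a_2$, $a_1a_2=0$, so for $\alpha=1$ there are \emph{two} $\mathbb{G}_m$-orbits, represented by $(a_1,a_2,a_3,a_4)=(1,0,-1,0)$ and $(0,1,0,-1)$ (they lie on the two reducible families $a_2=a_4=0$ and $a_1=a_3=0$), and a short computation shows both are $A_1$-points of $\mathcal{R}[2,1]$. The second orbit has $a_1=0$, so it is invisible in your chart $\mathcal{R}_1$, and your statement that away from $p_0$ the invariant eigenline is unique is false at that point. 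Hence, as written, your proof establishes the lemma only for $\alpha\neq 1$ (and, for $s=-2$, only for $\alpha\neq -1$, where the same splitting occurs). The repair is mechanical --- rerun the local analysis in the chart $a_2=1$ at the second point, where it goes through verbatim, so that the resolution acquires one $(-2)$-curve over each of the two singular points --- but it must be done. In fairness, this boundary case is the one where the paper's own preceding assertion of ``one singular point'' is also inaccurate; it is exactly the case $\beta^{\pm 1}=\alpha$ in which, as the paper notes, reducible monodromy is present.
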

 The map
${\bf S}^+\rightarrow \mathcal{P}^+$ is defined by $\beta =e^{2\pi i\lambda }$ where 
$\delta_M n=\lambda n$ for a basis vector $n$ of $N\subset \mathbb{C}((z))\otimes M$ and $\alpha$ as before. The fibre is written as ${\bf S}^+(\beta , \alpha )$.

\begin{lemma}  The map  ${\bf S}^+(\beta ,\alpha )\rightarrow \mathcal{R}^+(\beta ,\alpha )\times T$ is bijective.
\end{lemma}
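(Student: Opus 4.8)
The plan is to deduce this fibrewise bijection from the already-established bijection $\mathbf{S}\to\mathcal{R}\times T$ (Theorem 1.7 of \cite{vdP-Sa}) by checking that the two notions of level structure correspond under the Riemann--Hilbert dictionary. Both the enriched set $\mathbf{S}^+$ and the enriched monodromy space $\mathcal{R}^+$ are obtained from their unadorned versions by adjoining one extra datum attached to the topological monodromy: on the module side a rank-one $\delta_M$-submodule $N\subset\mathbb{C}((z))\otimes M$ corresponding to an eigenvector of $top_0$, and on the monodromy side a $top_\infty$-invariant line $L\subset V(\infty)$; in both cases the level-structure parameter $\beta$ records the eigenvalue with which the topological monodromy acts. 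Since $top_0=top_\infty$, I would set up a commutative square relating the forgetful maps $\mathbf{S}^+(\beta,\alpha)\to\mathbf{S}$ and $\mathcal{R}^+(\beta,\alpha)\times T\to\mathcal{R}\times T$ to the bijection of Theorem 1.7, and then show that the fibre-to-fibre assignment $N\mapsto L$ is itself a bijection between the level structures on $M$ and those on its image in $\mathcal{R}$, compatibly with $\beta$.

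First I would treat the generic locus $\beta\neq\pm1$. Because $\Lambda^2M$ is trivial the eigenvalues of $top_\infty$ are $\beta,\beta^{-1}$, so for $\beta\neq\pm1$ the monodromy is regular semisimple and possesses a unique invariant line $L$ on which it acts by $\beta$; correspondingly $top_0$ has a unique eigenline, which (the singularity at $0$ being regular, $r(0)=0$) is cut out by a unique rank-one $\delta_M$-submodule $N$ with $\delta_M n=\lambda n$, $e^{2\pi i\lambda}=\beta$. Hence over $\beta\neq\pm1$ both forgetful maps are bijections onto the fibre over $(\beta+\beta^{-1},\alpha)\in\mathcal{P}$, and the statement follows at once from Theorem 1.7. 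Injectivity and surjectivity in this range are then formal: an isomorphism of underlying modules automatically carries $N$ to $N'$, since each is intrinsically the $\beta$-eigenline of $top_0$, and any invariant line $L$ on the $\mathcal{R}$ side can be pulled back to the module produced by Theorem 1.7.

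The main obstacle is the exceptional locus $\beta=\pm1$, and in particular the fibre lying over the singular point of $\mathcal{R}[\pm2,\alpha]$ where $top_\infty=\pm I$. There the monodromy is scalar, so every line of $V(\infty)$ is invariant and the choice of $L$ ranges over a $\mathbb{P}^1$; by the preceding lemma this $\mathbb{P}^1$ is exactly the exceptional fibre of the minimal resolution $\mathcal{R}^+(\pm1,\alpha)\to\mathcal{R}[\pm2,\alpha]$. To match this on the module side I must show that the corresponding module $M$ admits precisely a $\mathbb{P}^1$ of rank-one $\delta_M$-submodules $N$ with the prescribed eigenvalue, in natural bijection with the invariant lines of $top_0=\pm I$. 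This is a purely local computation at $z=0$: with $A_0$ the exponent (residue) matrix of the regular singularity, one must verify that $top_0=\pm I$ forces $A_0$ to be scalar, so that every line of $\mathbb{C}((z))\otimes M$ is $\delta_M$-stable and the submodules $N$ are parametrized by a $\mathbb{P}^1$ matching the lines $L$. The delicate point is to rule out (or correctly account for) the resonant situation in which $e^{2\pi iA_0}=\pm I$ while $A_0$ is diagonalizable with eigenvalues differing by a nonzero integer; there $top_0=\pm I$ but only two lines are $\delta_M$-stable, so the naive count drops and the matching with the exceptional $\mathbb{P}^1$ would fail. One has to use the exclusion of the decomposable case together with the local normal form at a regular singularity to show that such modules do not occur in the fibre, or else are already separated by the resolution. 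Once this local matching is established, bijectivity on the exceptional fibre follows, and assembling the generic and exceptional loci yields the bijection $\mathbf{S}^+(\beta,\alpha)\to\mathcal{R}^+(\beta,\alpha)\times T$ for all $(\beta,\alpha)$.
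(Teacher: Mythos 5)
Your overall strategy --- reduce to the bijection ${\bf S}\rightarrow\mathcal{R}\times T$ of \cite{vdP-Sa}, Theorem 1.7, and then match level structures $N\leftrightarrow L$ fibrewise using $top_0=top_\infty$ --- is exactly the route the paper intends (the lemma is stated without proof; compare the proof of Proposition 3.2, which is precisely this argument for the rank-three case), and your treatment of the generic locus $\beta\neq\pm1$ is correct. However, your handling of the exceptional locus rests on a false premise. You claim that in the resonant situation, where $A_0$ is diagonalizable with eigenvalues differing by a nonzero integer and $e^{2\pi iA_0}=\pm I$, ``only two lines are $\delta_M$-stable,'' and you then propose to rule such modules out using irreducibility. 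Both halves are wrong. A rank-one submodule of $\mathbb{C}((z))\otimes M$ need not be spanned by an eigenvector of $A_0$ inside a given lattice: if $\delta e_1=\lambda e_1$ and $\delta e_2=(\lambda+k)e_2$ with $0\neq k\in\mathbb{Z}$, then $f_2:=z^{-k}e_2$ satisfies $\delta f_2=\lambda f_2$, so over the field $\mathbb{C}((z))$ the module is isomorphic to one with scalar residue and carries a full $\mathbb{P}^1$ of rank-one submodules $\mathbb{C}((z))(c_1e_1+c_2f_2)$, all with exponent $\lambda$ mod $\mathbb{Z}$. Resonance is a property of a chosen lattice (a matrix presentation), not of the $\mathbb{C}((z))$-module, so it is not a condition one can impose or exclude on elements of ${\bf S}$; indeed the paper's own normal form $z\frac{d}{dz}+{\frac{\theta_0}{2}\ \ \ \ 0\choose 0\ \ 1-\frac{\theta_0}{2}}$ with $\theta_0\in\mathbb{Z}$ in Observations 2.2 is exactly such a resonant presentation, occurring at honest points of the moduli space. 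So the argument you sketch for the case $top_0=\pm I$ would fail: the modules you want to discard are there, and they have the ``right'' $\mathbb{P}^1$ of level structures after all.

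The uniform argument that repairs this needs no case distinction: the category of regular singular differential modules over $\mathbb{C}((z))$ is equivalent to the category of finite-dimensional $\mathbb{C}$-vector spaces equipped with an automorphism (the topological monodromy), and under this equivalence rank-one submodules $N$ correspond bijectively to $top_0$-invariant lines $L$, the $\delta$-eigenvalue $\lambda$ (well defined mod $\mathbb{Z}$) of a generator of $N$ corresponding to the monodromy eigenvalue $e^{2\pi i\lambda}=\beta$ on $L$. This matches level structures with parameter $\beta$ on the module side with invariant lines with eigenvalue $\beta$ on the monodromy side in every case: when $top_0$ is regular semisimple or a single Jordan block there is exactly one such $N$ and one such $L$, and when $top_0=\pm I$ there is a $\mathbb{P}^1$ of each, matching the exceptional curve of the resolution in Lemma 1.1. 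Combined with Theorem 1.7 of \cite{vdP-Sa}, this gives the bijection ${\bf S}^+(\beta,\alpha)\rightarrow\mathcal{R}^+(\beta,\alpha)\times T$ directly.
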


The {\it  reducible locus } of $\mathcal{R}^+$ (i.e., the monodromy data is reducible) is the disjoint union of the closed sets defined (in  the notation of $\mathcal{A}$) by $a_2=a_4=0$ and $a_1=a_3=0$. The space $\mathcal{R}^+(\beta ,\alpha )$ contains no reducible elements for $\beta ^{\pm 1}\neq \alpha$. If  $\beta ^{\pm 1} =\alpha $, then the reducible locus of   $\mathcal{R}^+(\beta ,\alpha )$  consists of two non intersecting projective lines.

\section{The moduli space $\mathcal{M}(\theta _0,\theta _\infty)$}
Choose $\theta_0$ with $\beta =e^{\pi i\theta _0}$ and  $\theta _\infty$ with $\alpha =e^{\pi i \theta_\infty}$.
The aim is to replace the set  ${\bf S}^+(\beta ,\alpha )$ by a moduli space of connections 
$\mathcal{M}(\theta _0,\theta_\infty)$ and to study the extended  Riemann--Hilbert map
$RH^{ext}: \mathcal{M}(\theta _0,\theta_\infty)\rightarrow \mathcal{R}^+(\beta ,\alpha)\times T$.

Let a module $(M,N)\in {\bf S}^{+}(\beta,\alpha)$ be given. We define a connection $(\mathcal{W},\nabla )$ on the projective line with $\nabla :\mathcal{W}\rightarrow \Omega ([0]+3[\infty ])\otimes \mathcal{W}$, with generic fibre $M$,  by prescribing the connection $D:=\nabla _{z\frac{d}{dz}}$ locally at $z=0$ as $z\frac{d}{dz}+{\frac{\theta_0}{2}\ \ *\choose \ \ 0\ \ -\frac{\theta_0}{2}}$ and locally at $z=\infty$ as 
$z\frac{d}{dz}+ {\omega \ \ 0 \choose 0\ -\omega }$ with $\omega =z^2+\frac{t}{2}z+\frac{\theta_\infty}{2}$.
This is equivalently to choosing `invariant lattices' at $z=0$ and $z=\infty$. The invariant lattice at $z=0$ is
$\mathbb{C}[[z]]g_1+\mathbb{C}[[z]]g_2\subset \mathbb{C}((z))\otimes M$ with $N=\mathbb{C}((z))g_1$ and
the matrix of $\delta _M$ with respect to $g_1,g_2$ is ${\frac{\theta_0}{2} \ \ \ * \choose \ 0\ \ -\frac{\theta_0}{2} }$.
The invariant lattice at $z=\infty$ is $\mathbb{C}[[z^{-1}]]h_1+\mathbb{C}[[z^{-1}]]h_2\subset \mathbb{C}((z^{-1}\otimes M$ such that $\delta _Mh_1=\omega h_1,\ \delta _Mh_2=-\omega h_2$.\\

 The second exterior power of $(\mathcal{W},\nabla )$ is $d:O\rightarrow \Omega$. Thus $\mathcal{W}$ has degree 0 and type  $O(k)\oplus O(-k)$ with $k\geq 0$. If $M$ is irreducible, then $k\in \{0,1\}$.
 The reducible modules are studied in Observations 2.3.\\ {\it We consider the case $k\in \{0,1\}$}. The connection $(\mathcal{V},\nabla)$,  defined by replacing the invariant lattice $\mathbb{C}[[z]]g_1+\mathbb{C}[[z]]g_2$ by $\mathbb{C}[[z]]g_1+\mathbb{C}[[z]]zg_2$, has type $O\oplus O(-1)$. Further 
 we identify   $\mathcal{V}$ with $Oe_1\oplus O(-[0])e_2$.

 \subsection{The connections on $\mathcal{V}:=Oe_1+O(-[0])e_2$}
 The connection $D=\nabla _{z\frac{d}{dz}}:\mathcal{V}\rightarrow O(2[\infty ])\otimes \mathcal{V}$, obtained from $(M,N)\in {\bf S}^{+}(\beta ,\alpha)$ and the prescribed invariant lattices, has,
 with respect to the basis $e_1,e_2$, the matrix ${a\ \ b \choose c\ -a }$ with
 $a=a_0+a_1z+a_2z^2$, $b=b_{-1}z^{-1}+\cdots +b_2z^2$, $c=c_1z+c_2z^2$. The local data at $z=\infty$
 yields the equations
 \[a_2^2+b_2c_2=1,\ 2a_1a_2+b_2c_1+b_1c_2=t,\ 2a_0 a_2+a_1^2+b_1c_1+b_0c_2=\theta _\infty +\frac{t^2}{4}.\] 
 For $z=0$ one obtains $a_0(a_0-1)+b_{-1}c_1=\frac{\theta_0}{2}(\frac{\theta_0}{2}-1)$.\\

 {\it As a start, we forget the level structure $N$ of the pair $(M,N)\in  {\bf S}^{+}(\beta ,\alpha)$ and assume that
 $c_1z+c_2z^2\neq 0$. } The above variables $a_*,b_*,c_*,t$ and equations define a space $\mathcal{C}$ of dimension 6. We have to divide by the group $G$ of transformations $e_1\mapsto e_1,\ e_2\mapsto \lambda e_2+(x_0+x_1z^{-1})e_1$ of $\mathcal{V}$.
 The quotient $\mathcal{C}/G$ is by definition {\it the moduli space $\mathcal{M}(\theta_0,\theta_\infty)$}.
 
 \begin{proposition}  $\mathcal{M}(\theta_0,\theta_\infty)$ is a  good geometric quotient of $\mathcal{C}$ in the sense
 that there exists a $G$-equivariant isomorphism 
 $G\times \mathcal{M}(\theta_0,\theta_\infty)\rightarrow \mathcal{C}$.\\
 $\mathcal{M}(\theta_0,\theta_\infty)$ is smooth for $\theta_0 \neq 1$. 
For a connection $D \in \mathcal{M}(1,\theta_\infty)$, which is a singular point,  there is a basis of 
$\widehat{\mathcal{V}}_0$ for which $D$ has the form $z\frac{d}{dz}+{\frac{1}{2}\ \ 0\choose 0\ \ \frac{1}{2} }$.     
 \end{proposition}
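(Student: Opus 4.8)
The plan is to make the $G$-action on the matrix $\begin{pmatrix} a & b \\ c & -a\end{pmatrix}$ completely explicit, deduce freeness from $c\neq 0$, build the quotient from two affine slices, and then read off both smoothness and the singular point from the four defining equations of $\mathcal{C}$, which I denote $F_1,F_2,F_3$ (the three identities at $z=\infty$) and $F_4$ (the one at $z=0$). First I would record the action: writing $g=x_0+x_1z^{-1}$ for the element $e_1\mapsto e_1,\ e_2\mapsto \lambda e_2+ge_1$ of $G$, a direct computation gives $c\mapsto \lambda^{-1}c$, $\ a\mapsto a-\lambda^{-1}gc$ and $b\mapsto \lambda b+z\frac{dg}{dz}+2ga-\lambda^{-1}g^2c$. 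In particular $a$ evaluated at any zero of $c$ and the point $[c_1:c_2]\in\mathbb{P}^1$ are $G$-invariant. Freeness is then immediate: if $g$ fixes the connection, then $\lambda^{-1}c=c$ forces $\lambda=1$ since $c\neq0$, and the three coefficient identities in $a-gc=a$ together with $c\neq0$ force $x_0=x_1=0$.

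For the quotient I would cover $\mathcal{C}$ by the $G$-invariant opens $U_2=\{c_2\neq0\}$ and $U_1=\{c_1\neq0\}$. On $U_2$ one uses $\lambda$ to normalise $c_2=1$ and then $x_0,x_1$ to kill $a_2$ and $a_1$, which exhibits the affine slice $\Sigma_2=\{a_1=a_2=0,\ c_2=1\}\cap\mathcal{C}$ and a $G$-equivariant isomorphism $G\times\Sigma_2\xrightarrow{\sim}U_2$; symmetrically on $U_1$. Gluing $\Sigma_1,\Sigma_2$ along the overlap defines $\mathcal{M}(\theta_0,\theta_\infty)=\mathcal{C}/G$, with the invariants $t$, the apparent singularity $q=[c_1:c_2]$ and $p=a(q)$ as coordinates ($q$ ranging over $\mathbb{P}^1$). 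These local products realise $\mathcal{C}\to\mathcal{M}$ as a principal $G$-bundle, which is the content of the good geometric quotient; since $G=\mathbb{G}_m\ltimes\mathbb{G}_a^2$ is a special solvable group, the bundle is Zariski-locally the product $G\times\mathcal{M}$, and controlling the $\mathbb{G}_m$-weight when patching the two slices is the only delicate bookkeeping.

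Because the action is free, $\mathcal{C}$ is locally $G\times\mathcal{M}$, so $\mathcal{M}$ is smooth exactly where $\mathcal{C}$ is, and I would prove smoothness of $\mathcal{C}$ by the Jacobian criterion for $F_1,\dots,F_4$. A short computation shows $\nabla F_1,\nabla F_2,\nabla F_3$ are independent at every point of $\mathcal{C}$: the $t$-column is $(0,-1,-\tfrac{t}{2},0)$, so $\nabla F_2$ separates off through the $t$-direction, and the $b_0,b_1,b_2$-columns (whose entries are built from $c_1,c_2$) then pin down the remaining multipliers using $c\neq0$, the case $c_2=0$ being covered by $a_2=\pm1$ from $F_1$. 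The full system then fails to have rank $4$ only where $\nabla F_4$ vanishes; its only nonzero entries are $\partial_{a_0}=2a_0-1$, $\partial_{b_{-1}}=c_1$, $\partial_{c_1}=b_{-1}$, so this happens exactly at $a_0=\tfrac12,\ b_{-1}=c_1=0$. Substituting into $F_4$ gives $\tfrac{\theta_0}{2}(\tfrac{\theta_0}{2}-1)=-\tfrac14$, i.e.\ $\theta_0=1$. Hence $\mathcal{M}(\theta_0,\theta_\infty)$ is smooth for $\theta_0\neq1$; and for $\theta_0=1$ the singular locus is $\{a_0=\tfrac12,\ b_{-1}=c_1=0\}\cap\mathcal{C}$, which is $G$-free and maps isomorphically to $T$, giving one singular point in each fibre.

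Finally I would identify the connection there. Normalising this locus by $G$ produces the explicit representative with $a=\tfrac12$, $c=z^2$, $b_{-1}=0$; passing to the genuine rank-two lattice basis $f_1=e_1,\ f_2=ze_2$ of $\widehat{\mathcal{V}}_0$ converts the matrix into $\begin{pmatrix} a & zb\\ c/z & 1-a\end{pmatrix}$, whose value at $z=0$ is $\begin{pmatrix} a_0 & b_{-1}\\ c_1 & 1-a_0\end{pmatrix}$ (with eigenvalues $\tfrac{\theta_0}{2},\,1-\tfrac{\theta_0}{2}$ in general, via $F_4$). At the singular point this residue is exactly $\tfrac12 I$, and because a scalar leading term carries no resonance obstruction the higher-order terms can be gauged away, yielding $z\frac{d}{dz}+\begin{pmatrix}\tfrac12 & 0\\ 0 & \tfrac12\end{pmatrix}$. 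I expect this last translation — from the rank drop of the algebraic system, through the lattice shift $e_2\mapsto ze_2$ and the resonance $\theta_0=1$, to the intrinsic normal form — to be the conceptual crux; freeness and the Jacobian rank count are routine by comparison.
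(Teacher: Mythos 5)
Your proof is correct and is essentially the paper's own argument: your slices $\Sigma_1,\Sigma_2$ are exactly the paper's standard forms $ST_1$ (where $a_0=a_1=0,\ c_1=1$) and $ST_2$ (where $a_1=a_2=0,\ c_2=1$), your equivariant isomorphisms $G\times\Sigma_i\xrightarrow{\sim}U_i$ and the gluing are precisely the paper's construction of $\mathcal{C}/G$, your Jacobian analysis of $F_1,\dots,F_4$ amounts to the paper's inspection of the two coordinate rings ($ST_1$ nonsingular; $ST_2$ singular exactly at $a_0=\tfrac12,\ b_{-1}=c_1=0,\ \theta_0=1$ for fixed $t$), and your residue-$\tfrac12 I$ non-resonance argument is the paper's verification that the operator is equivalent over $\mathbb{C}[[z]]$ to $z\frac{d}{dz}+\begin{pmatrix}\tfrac12&0\\0&\tfrac12\end{pmatrix}$. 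One inessential slip: $(t,q,p)$ with $q=[c_1:c_2]$, $p=a(q)$ are not coordinates on the quotient (on the chart $ST_2$ over $c_1=0$ the variable $b_{-1}$ stays free while $(t,q,p)$ is constant), but your proof never relies on this parenthetical claim, since the gluing of the two slices is what defines $\mathcal{M}(\theta_0,\theta_\infty)$.
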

 \begin{proof}  The {\it  `first standard form'}  $ST_1$ is the closed subset  of $\mathcal{C}$ defined by:\\ 
 $z\frac{d}{dz}+{a_2z^2\ \ b\choose z+c_2z^2 \ -a_2z^2 }$  with $b_2=-c_2(\theta _\infty +\frac{t^2}{4}-b_0c_2)+t$, $b_1=\theta _\infty +\frac{t^2}{4}-b_0c_2$,  $b_{-1}=\frac{\theta _0}{2}(\frac{\theta _0}{2}-1)$ and 
 $a_2^2+c_2t-c_2^2(\theta _\infty +\frac{t^2}{4}-b_0c_2)-1=0$. The obvious morphism 
 $G\times ST_1\rightarrow \{(a_*,b_*,c_*)\in \mathcal{C}|\ c_1\neq 0\}$ is an isomorphism.\\
$\mathbb{C}[a_2,c_2,t,b_0]/ (a_2^2+c_2t-c_2^2(\theta _\infty +\frac{t^2}{4}-b_0c_2)-1)$ is the coordinate ring of
$ST_1$ and $ST_1$ is nonsingular.\\

 The {\it  `second standard form'}  $ST_2$ is the closed subset  of $\mathcal{C}$ defined by:\\
 $z\frac{d}{dz}+{a_0\ \ b\choose c_1z+z^2\ -a_0 }$ with $b=z^2+b_1z+b_0+b_{-1}z^{-1}$, $c=c_1z+z^2$ and 
  $c_1+b_1=t$, $b_1c_1+b_0=\theta _\infty +\frac{t^2}{4}$,  $a_0(a_0-1)+b_{-1}c_1=\frac{\theta_0}{2}(\frac{\theta_0}{2}-1)$.
The obvious morphism $G\times ST_2\rightarrow \{(a_*,b_*,c_*)\in \mathcal{C}|\ c_2\neq 0\}$ is an isomorphism.\\
$\mathbb{C}[a_0,c_1,t,b_{-1}]/(a_0(a_0-1)+b_{-1}c_1-\frac{\theta_0}{2}(\frac{\theta_0}{2}-1))$ is the coordinate
ring of $ST_2$.   {\it For fixed $t$, one finds one singular point}:  $a_0=1/2,\ b_{-1}=c_1=0,\ \theta_0=1$.\\
In the above case, one easily verifies that $D$ has the form 
$z\frac{d}{dz}+{\frac{1}{2}\ \ 0 \choose 0\ \ \frac{1}{2} }$ w.r.t. a basis of $\widehat{\mathcal{V}}_0$.
The quotient $\mathcal{C}/G$ is obtained by gluing the two `charts' $ST_1$ and $ST_2$ in the obvious way.   \end{proof}

\begin{observations} The level structure for $\mathcal{M}(\theta _0,\theta_\infty)$.\\
 {\rm For a connection $D\in \mathcal{M}(\theta _0,\theta_\infty)$, the level structure is a 1-dimensional
submodule $N\subset \mathbb{C}((z))\otimes \widehat{\mathcal{V}}_0$ with a generator $n$ such that
$\delta n=e^{\pi i \theta_0}n$. The space $\mathcal{M}^+(\theta _0,\theta_\infty)$ denotes the addition of this level structure to $\mathcal{M}(\theta _0,\theta_\infty)$. 

If $top_0$, the topological monodromy at $z=0$ of the connection $D$, is not $\pm {1\ 0\choose 0 \ 1 }$, then the level structure $N$ is unique. 

If $top_0=\pm {1\ 0\choose 0 \ 1 }$, then $\theta_0\in \mathbb{Z}$. Further $\widehat{\mathcal{V}}_0$
has a basis $v_1,v_2$  for which $D$ has the form
$z\frac{d}{dz}+{\frac{\theta_0}{2}\ \ \ \ 0\choose 0\ \ 1-\frac{\theta_0}{2}  }$. If
$\frac{\theta_0}{2}\neq 1-\frac{\theta_0}{2}$, the basis $v_1,v_2$ is unique up to multiplication by constants. Then one defines the level structure $N$ by $N=\mathbb{C}((z))v_1$.

In the final case $top_0=-{1\ 0\choose 0\ 1 }$ and $\theta _0=1$, the connection $D$ does not prescribe a level structure. We replace  $\mathcal{M}(1,\theta_\infty)$ by
 $\mathcal{M}^+(1,\theta_\infty)$ defined as the closed subspace of $\mathcal{M}(1,\theta_\infty)\times \mathbb{P}^1$ consisting of the equivalence classes of the tuples $(D,L)$ with
  $D\in \mathcal{M}(1,\theta_\infty)$ and $L$ a line in $\widehat{\mathcal{V}}_0$ at $z=0$, invariant under $D$. We will verify that  (for fixed $t$) $\mathcal{M}^+(1,\theta_\infty)\rightarrow \mathcal{M}(1,\theta_\infty)$ is the minimal resolution.
\smallskip

\noindent 
{\it Verification}. The chart $ST_2$ of $\mathcal{M}(1,\theta_\infty)$ consists of the differential
operators $z\frac{d}{dz}+{a_0\ \ \ zb\choose c_1+z\ \ \ 1-a_0}$ with 
$b=z^2+b_1z+b_0+b_{-1}z^{-1},\ c_1+b_1=t,\ b_1c_1+b_0=\theta_\infty +\frac{t^2}{4}, \ a_0(a_0-1)+b_{-1}c_1
=-\frac{1}{4}.$
The line $L= \mathbb{C}{x_1\choose x_0 }$ is generated by a nonzero element ${x_1\choose x_0 }\in \mathbb{C}[[z]]^2$  satisfying  $\{z\frac{d}{dz}+{a_0\ \ \ zb\choose c_1+z\ \ \ 1-a_0}\}{x_1\choose x_0 }=\frac{1}{2}{x_1\choose x_0}$. In the case $a_0=\frac{1}{2}, \ b_{-1}=c_1=0$, the operator  $z\frac{d}{dz}+{a_0\ \ \ zb\choose c_1+z\ \ \ 1-a_0}$ is equivalent over $\mathbb{C}[[z]]$ to $z\frac{d}{dz}+{\frac{1}{2}\ 0\choose 0\ \frac{1}{2} }$. Thus the possible lines
 $L$ form a projective line. In the opposite case, the operator is equivalent over $\mathbb{C}[[z]]$ to 
$z\frac{d}{dz}+{\frac{1}{2}\ 1\choose 0\ \frac{1}{2} }$ and there is only one $L$.
}\end{observations}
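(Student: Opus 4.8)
The plan is to fix $t\in T$ and to work entirely in the chart $ST_2$, since the unique singular point of $\mathcal{M}(1,\theta_\infty)$ produced in the preceding Proposition lies there (the chart $ST_1$ is already nonsingular). Setting $u=a_0-\tfrac12$, the defining relation $a_0(a_0-1)+b_{-1}c_1=-\tfrac14$ becomes $u^2+b_{-1}c_1=0$, so for fixed $t$ the surface is, locally near the singular point, the affine hypersurface $\{u^2+b_{-1}c_1=0\}$ in coordinates $(u,b_{-1},c_1)$. This is the standard $A_1$ (ordinary double point) singularity at the origin; being a hypersurface with isolated singularity it is normal. The goal is to show that $\mathcal{M}^+(1,\theta_\infty)$ resolves exactly this point with a single $(-2)$-curve, which is precisely the minimal resolution of an $A_1$ point.

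First I would pin down the fibres of $\mathcal{M}^+\to\mathcal{M}$ set-theoretically. A $D$-invariant line corresponds to a formal eigenvector $v=\sum_{n\ge0}v_nz^n$ with $Dv=\tfrac12 v$ (recall $\theta_0=1$, so the relevant eigenvalue is $\tfrac{\theta_0}{2}=\tfrac12$), and matching constant terms forces $v_0\in\ker\bigl(A(0)-\tfrac12 I\bigr)$, where $A(0)=\begin{pmatrix}a_0 & b_{-1}\\ c_1 & 1-a_0\end{pmatrix}$, i.e. $A(0)-\tfrac12 I=\begin{pmatrix}u & b_{-1}\\ c_1 & -u\end{pmatrix}$. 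On the surface one has $\det(A(0)-\tfrac12 I)=-(u^2+b_{-1}c_1)=0$ and $\operatorname{tr}(A(0)-\tfrac12 I)=0$, so this matrix is nilpotent; hence $A(0)-\tfrac12 I+nI$ has determinant $n^2\neq0$ for every $n\ge1$, and the resulting recursion determines all $v_n$ uniquely from $v_0$. Thus invariant lines are in bijection with $\mathbb{P}\bigl(\ker(A(0)-\tfrac12 I)\bigr)$: at the singular point $A(0)-\tfrac12 I=0$ and the fibre is $\mathbb{P}^1$, while off it the matrix has rank one and the fibre is a single point. In particular $\mathcal{M}^+\to\mathcal{M}$ is bijective away from the singular point; this is the content read off from the two normal forms $\operatorname{diag}(\tfrac12,\tfrac12)$ and the Jordan block recorded in the statement.

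Next I would prove smoothness of $\mathcal{M}^+$ and identify the resolution. Writing the fibre coordinate as $[p:q]$, the incidence variety is $\{u^2+b_{-1}c_1=0,\ up+b_{-1}q=0,\ c_1p-uq=0\}$. In the chart $p=1$ one solves $u=-b_{-1}q$ and $c_1=-b_{-1}q^2$, the surface equation then holding automatically, so this chart is $\mathbb{A}^2$ with coordinates $(b_{-1},q)$; symmetrically the chart $q=1$ is $\mathbb{A}^2$ with coordinates $(c_1,p)$, glued along $pq=1$ by $(b_{-1},q)\mapsto(c_1,p)=(-b_{-1}q^2,\,1/q)$. Hence $\mathcal{M}^+$ is smooth and $\pi:\mathcal{M}^+\to\mathcal{M}$ is projective, birational, and an isomorphism off the singular point. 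The exceptional fibre is $\{b_{-1}=0\}\cup\{c_1=0\}\cong\mathbb{P}^1=:E$, and from the transition $c_1=-b_{-1}q^2$ between the two fibre coordinates the conormal bundle $\mathcal{I}_E/\mathcal{I}_E^2$ has transition $-q^2$ and hence degree $2$, so $N_E\cong\mathcal{O}_{\mathbb{P}^1}(-2)$ and $E^2=-2$. Since the only exceptional curve is this $(-2)$-curve, $\pi$ contracts no $(-1)$-curve and is therefore the minimal resolution of the $A_1$ point; as this is the only singular point, $\mathcal{M}^+(1,\theta_\infty)\to\mathcal{M}(1,\theta_\infty)$ is the minimal resolution for every fixed $t$.

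The main obstacle is the eigenvector bookkeeping of the second paragraph: one must guarantee that an invariant line is genuinely detected by its leading coefficient $v_0$, i.e. that no resonance among the exponents obstructs solving for the higher $v_n$. The nilpotency of $A(0)-\tfrac12 I$ is exactly what forces non-resonance, $\det(A(0)-\tfrac12 I+nI)=n^2$, and this is what makes the fibre equal to $\mathbb{P}\bigl(\ker(A(0)-\tfrac12 I)\bigr)$ rather than something larger. Once that is in place, recognizing the gluing $(b_{-1},q)\mapsto(-b_{-1}q^2,1/q)$ as the total space of $\mathcal{O}_{\mathbb{P}^1}(-2)$ — and hence $\pi$ as the minimal, not an over-blown-up, resolution — is routine.
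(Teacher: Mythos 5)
Your proposal is correct, and in fact it proves more than the paper's own verification records. The set-theoretic core is the same in both arguments: an invariant line is detected by a formal eigenvector for the eigenvalue $\tfrac12$, and the fibre of $\mathcal{M}^+(1,\theta_\infty)\rightarrow\mathcal{M}(1,\theta_\infty)$ is a full $\mathbb{P}^1$ exactly at $a_0=\tfrac12,\ b_{-1}=c_1=0$ and a single point elsewhere. The paper obtains this from the two $\mathbb{C}[[z]]$-normal forms $z\frac{d}{dz}+{\frac{1}{2}\ 0\choose 0\ \frac{1}{2}}$ and $z\frac{d}{dz}+{\frac{1}{2}\ 1\choose 0\ \frac{1}{2}}$, while you obtain it from the recursion $(A(0)-\tfrac12 I+nI)v_n=-\sum_{k=1}^{n}A_kv_{n-k}$ together with nilpotency of $A(0)-\tfrac12 I$; these are equivalent (your non-resonance computation is precisely the reason the operator is $\mathbb{C}[[z]]$-equivalent to its residue term), so there is no essential difference at this stage. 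Where you genuinely go beyond the paper is in supplying what the phrase ``minimal resolution'' actually requires: the paper stops at the fibre count, whereas you write the relation as $u^2+b_{-1}c_1=0$ (an $A_1$ point), exhibit $\mathcal{M}^+$ as the incidence variety, check smoothness via the two affine charts glued by $(b_{-1},q)\mapsto(-b_{-1}q^2,1/q)$ — i.e.\ the total space of $\mathcal{O}_{\mathbb{P}^1}(-2)$ — and conclude $E^2=-2$, so minimality follows from the absence of exceptional $(-1)$-curves. That is exactly the content the paper leaves implicit. One small point to tidy: your argument lives entirely in the chart $ST_2$ (for fixed $t$), so to get the global statement you should add that over $ST_1$ the residue matrix in the lattice basis $e_1, ze_2$ is ${0\ \ -\frac{1}{4}\choose 1\ \ \ 1}$, whose shift by $-\tfrac12 I$ is nilpotent and never zero; hence the same recursion gives a unique, regularly varying invariant line there, and $\pi$ is an isomorphism over $ST_1$ as well (alternatively, invoke that a bijective projective birational morphism onto a normal surface is an isomorphism).
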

 
 \begin{observations} The reducible locus of  $\mathcal{M}(\theta _0,\theta_\infty)$.\\ {\rm
Put $\omega =z^2+\frac{t}{2}z+\frac{\theta_\infty}{2}$ and let $c\in \mathbb{C}[z^{-1},z]$. If
a reducible connection is present in $\mathcal{M}(\theta_0,\theta_\infty)$, then
$\frac{\theta_0}{2}\in \pm \frac{\theta_\infty}{2}+\mathbb{Z}$.
There are two types of reducible modules in ${\bf S}^+$. 
 { Type} (1) is represented by $z\frac{d}{dz}+{-\omega\  0\choose c\ \ \omega }$ and 
 { Type} (2) is represented by $z\frac{d}{dz}+{\omega \ \ 0\choose c\ -\omega}$. \\
 
 \noindent 
 For a given reducible module $M$, say of type (1), one defines (as before) the connection $(\mathcal{V},D)$ with generic  fibre $M$ by the local operators $z\frac{d}{dz}+{\frac{\theta_0}{2}\ \ \ \ *\choose 0\ \ 1-\frac{\theta_0}{2}  }$ at $z=0$ and $z\frac{d}{dz}+{\omega \ \ 0\choose 0\ -\omega }$ at $z=\infty$.
 Assume that type (1) is not present in  $\mathcal{M}(\theta _0,\theta_\infty)$. Then
 $\mathcal{V}\cong O(k)\oplus O(-k-1)$ with $k\geq 1$ and one identifies $\mathcal{V}$
 with $O(k[0])e_1\oplus O(-(k+1)[0])e_2$. A computation of $D$ in this case leads to two possible relations, namely $\frac{\theta_0}{2}=\frac{\theta_\infty}{2}-k$ or $1-\frac{\theta_0}{2}=\frac{\theta_\infty}{2}-k$. Thus one finds the list for type (1). The list for type (2) is found in a similar way. \\
  
 \noindent Type (1) is present in precisely the following cases:\\
 $\theta _0\geq \theta _\infty $ for  $\frac{\theta_0}{2}\in \frac{\theta _\infty}{2}+ \mathbb{Z}$ and  
 $\frac{\theta_0}{2}\not \in -\frac{\theta _\infty}{2}+ \mathbb{Z}$\\ 
  $\theta _0\leq -\theta _\infty +2 $ for $\frac{\theta_0}{2}\not \in \frac{\theta _\infty}{2}+ \mathbb{Z}$ and 
 $\frac{\theta_0}{2}\in -\frac{\theta _\infty}{2}+ \mathbb{Z}$\\ 
  $\theta _0\geq \theta _\infty $ or $\theta _0\leq -\theta _\infty +2 $ for  $\frac{\theta_0}{2} \in \frac{\theta _\infty}{2}+ \mathbb{Z}$ and $\frac{\theta_0}{2}\in -\frac{\theta _\infty}{2}+ \mathbb{Z}$\\

 \noindent Type (2) is present in precisely the following cases:\\
 $\theta _0\leq \theta _\infty +2 $ for  $\frac{\theta_0}{2}\in \frac{\theta _\infty}{2}+ \mathbb{Z}$ and 
 $\frac{\theta_0}{2}\not \in -\frac{\theta _\infty}{2}+ \mathbb{Z}$\\ 
  $\theta _0\geq -\theta _\infty $ for $\frac{\theta_0}{2}\not \in \frac{\theta _\infty}{2}+ \mathbb{Z}$ and 
 $\frac{\theta_0}{2}\in -\frac{\theta _\infty}{2}+ \mathbb{Z}$\\ 
  $\theta _0\leq \theta _\infty +2 $ or $\theta _0\geq -\theta _\infty $ for  $\frac{\theta_0}{2} \in \frac{\theta _\infty}{2}+ \mathbb{Z}$ and $\frac{\theta_0}{2}\in -\frac{\theta _\infty}{2}+ \mathbb{Z}$\\

   \noindent  {\it Examples}: We use the notation $z\frac{d}{dz}+{a\ \ b\choose c\ -a}$ of the space $\mathcal{C}$. Suppose $b=0$.\\
Then  $ a_2^2=1,\ 2a_1a_2=t,\ 2a_0a_2+a_1^2=\theta _\infty +\frac{t^2}{4},\ (a_0-\frac{1}{2})^2=(\frac{\theta _0}{2}-\frac{1}{2})^2$
and the non zero element $c_1z+c_2z^2$ is unique up to multiplication by a non zero constant. One finds in general four reducible families (with some overlap for $\theta _0=1$ and/or $\theta _\infty =\pm 1$):\\
$b=0,\ \ a_2=1,\ \ \ a_1=\frac{t}{2},\ \ \ a_0=\frac{\theta_\infty}{2},\ \ \ \frac{\theta_\infty}{2}=\frac{1}{2}\pm (\frac{\theta_0}{2}-\frac{1}{2})$ and \\
$b=0,\ \ a_2=-1,\ a_1=-\frac{t}{2},\ a_0=-\frac{\theta_\infty}{2},\  -\frac{\theta_\infty}{2}=\frac{1}{2}\pm (\frac{\theta_0}{2}-\frac{1}{2})$.\\
We observe that these examples are precisely the cases of an equality sign in 
the lists for type (1) and type (2).}\end{observations}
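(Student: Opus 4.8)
I would prove the three assertions of Observations 2.3 --- the necessary condition $\frac{\theta_0}{2}\in\pm\frac{\theta_\infty}{2}+\mathbb{Z}$, the two types, and the precise presence lists --- by reading everything off the degree of the invariant line subbundle of $(\mathcal{V},\nabla)$.

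The plan is to detect a reducible module through the invariant line subbundle of the associated connection $(\mathcal{V},\nabla)$ and to control its degree by a residue (Fuchs) identity; the admissible ranges for $\theta_0,\theta_\infty$ then result from comparing that degree with the line subbundles available on $O\oplus O(-1)$. I first record the necessary condition. If $M$ is reducible it has a rank one differential submodule $L$, whose saturation inside $\mathcal{V}$ is an invariant line subbundle $\mathcal{L}\subset\mathcal{V}$ carrying the induced rank one connection. At $z=\infty$ the eigenvalue of $\mathcal{L}$ is $+\omega$ or $-\omega$, with constant term $\pm\frac{\theta_\infty}{2}$; since $z\frac{d}{dz}=-w\frac{d}{dw}$ in the coordinate $w=z^{-1}$, the residue of the connection at $\infty$ is $\mp\frac{\theta_\infty}{2}$. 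At $z=0$ the residue equals the exponent $\mu\in\{\frac{\theta_0}{2},\,1-\frac{\theta_0}{2}\}$, the two exponents of $\mathcal{V}$. The identity $\deg\mathcal{L}=-(\mathrm{res}_0+\mathrm{res}_\infty)\in\mathbb{Z}$ forces $\mu\mp\frac{\theta_\infty}{2}\in\mathbb{Z}$, i.e. $\frac{\theta_0}{2}\in\pm\frac{\theta_\infty}{2}+\mathbb{Z}$, which is the stated condition and at the same time separates the four combinations (Type $(1)$ is eigenvalue $+\omega$, Type $(2)$ is $-\omega$; and $\mu$ is $\frac{\theta_0}{2}$ or $1-\frac{\theta_0}{2}$).

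Next I would evaluate $\deg\mathcal{L}=-(\mu+\mathrm{res}_\infty)$ in each case: for Type (1) one gets $\frac{\theta_\infty}{2}-\frac{\theta_0}{2}$ and $\frac{\theta_\infty}{2}+\frac{\theta_0}{2}-1$, and for Type (2) one gets $-\frac{\theta_\infty}{2}-\frac{\theta_0}{2}$ and $\frac{\theta_0}{2}-\frac{\theta_\infty}{2}-1$. Put $d=\deg\mathcal{L}$, so $\deg(\mathcal{V}/\mathcal{L})=-1-d$ because $\deg\mathcal{V}=-1$. The decisive point is the bundle dichotomy: the extension $0\to\mathcal{L}\to\mathcal{V}\to\mathcal{V}/\mathcal{L}\to0$ is governed by $H^1\bigl(\mathcal{L}\otimes(\mathcal{V}/\mathcal{L})^{-1}\bigr)=H^1(O(2d+1))$, which vanishes exactly for $d\ge-1$. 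Hence for $d\ge1$ the subbundle $\mathcal{L}\cong O(d)$ has positive degree, which no line subbundle of $O\oplus O(-1)$ can have; the extension splits as a bundle and $\mathcal{V}\cong O(d)\oplus O(-1-d)$ with $k=d\ge1$, so the module is \emph{absent} from $\mathcal{M}(\theta_0,\theta_\infty)$. This reproduces the relations $\frac{\theta_0}{2}=\frac{\theta_\infty}{2}-k$ (case $\mu=\frac{\theta_0}{2}$) and $1-\frac{\theta_0}{2}=\frac{\theta_\infty}{2}-k$ (case $\mu=1-\frac{\theta_0}{2}$) obtained by computing $D$ directly on $O(k)\oplus O(-k-1)$. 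Since $d\in\mathbb{Z}$ there are no intermediate values, so presence is equivalent to $d\le0$, which in the four cases reads $\theta_0\ge\theta_\infty$, $\theta_0\le-\theta_\infty+2$, $\theta_0\ge-\theta_\infty$, $\theta_0\le\theta_\infty+2$; grouping by the integrality condition that makes $d$ an integer yields precisely the displayed lists, with the ``or'' alternatives occurring exactly when $\theta_\infty\in\mathbb{Z}$ so that both exponent choices are available.

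For the converse (the cases $d\le0$) I must exhibit a reducible module with the prescribed formal data whose canonical bundle is $O\oplus O(-1)$, so that it genuinely lies in $\mathcal{M}$. For $d\in\{0,-1\}$ the extension is automatically split as a bundle and already of type $O\oplus O(-1)$, and the boundary $d=0$ is realised by the lower triangular ``$b=0$'' connections of the Examples, which are reducible but not decomposable (since $c\ne0$). For $d\le-2$ one has $H^1(O(2d+1))\ne0$, so non-split extensions exist, and a generic extension class balances $O(d)\oplus O(-1-d)$ into $O\oplus O(-1)$; writing such a connection in the charts $ST_1,ST_2$ and deforming the off-diagonal entry sweeps out the interior $d<0$ while keeping the module in $\mathbf{S}^+$. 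I expect this converse --- specifically, verifying that the generic balanced member really exists and stays non-decomposable --- to be the main obstacle; the residue formula for $d$ and the $H^1$ vanishing that drives the absence direction are then routine, as is the symmetric treatment of Type (2).
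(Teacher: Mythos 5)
Your route is genuinely different from the paper's and much of it is sound. The paper never introduces the residue identity: it argues by assuming a type-(1) module is absent, so that its bundle is $O(k)\oplus O(-k-1)$ with $k\geq 1$, and then computes directly which connections with the prescribed local forms can exist on that unbalanced bundle, finding exactly the relations $\frac{\theta_0}{2}=\frac{\theta_\infty}{2}-k$ or $1-\frac{\theta_0}{2}=\frac{\theta_\infty}{2}-k$; the lists follow by contraposition. Your Fuchs-relation computation of $d=\deg\mathcal{L}$ recovers precisely these relations (your $d$ is the paper's $k$), gives the congruence condition for free, and your absence direction is actually cleaner than the paper's: if $d\geq 1$ then $\mathcal{L}$ cannot embed in $O\oplus O(-1)$, period. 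This settles one half of ``precisely'' with no computation, a half the paper leaves implicit.

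The genuine gap is the presence direction for $d\leq -2$. Your argument there conflates two different extension problems. The class of the bundle extension $0\to O(d)\to\mathcal{V}\to O(-1-d)\to 0$ is not a free parameter you may choose generically: it is the image, under the forgetful map, of the class of the module extension $0\to M_{\rm sub}\to M\to M_{\rm quot}\to 0$ subject to the prescribed local data at $0$ and $\infty$, and nothing in your argument shows this image avoids the proper closed locus of classes producing unbalanced bundles. (For $d\leq -2$ the bundle type is genuinely not forced: $O(d)$ embeds as a saturated subbundle of every $O(a)\oplus O(-1-a)$ with $0\leq a\leq -1-d$, so ``some nonsplit extension is balanced'' does not give ``some admissible connection is balanced''.) The fallback you propose --- writing the connection in the charts $ST_1,ST_2$ and deforming --- is circular: membership in those charts is the statement that the bundle is $O\oplus O(-1)$, i.e.\ presence, which is what you are trying to prove. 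This is exactly the hole the paper's unbalanced-bundle computation closes: since the module together with the lattice prescriptions determines the bundle up to isomorphism, and since connections with the prescribed local forms exist on $O(k)\oplus O(-k-1)$, $k\geq1$, only when $d=k\geq 1$, every type-(1) module with $d\leq 0$ must live on $O\oplus O(-1)$. To finish your proof you must either carry out that matrix computation on $O(k[0])e_1\oplus O(-(k+1)[0])e_2$, or exhibit explicitly, for each $d\leq -2$ in the admissible range, a reducible connection on $O\oplus O(-1)$ whose invariant subbundle has degree $d$; your $b=0$ examples only cover $d=0$, and the $H^1$-splitting argument only covers $d\in\{0,-1\}$.
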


\begin{proposition} Put  $\beta =e^{\pi i\theta _0},\ \alpha =e^{\pi i \theta_\infty}$.
Let $F:\mathcal{M}^+(\theta _0,\theta _\infty)\rightarrow  {\bf S}^{+}(\beta ,\alpha )$ be the map
that sends a tuple $(D,L)$ to $(M,N)$ where $M$ is the generic fibre of $D$ and $N=\mathbb{C}((z))\otimes L$. The map $F$ is injective and its image contains the `irreducible locus' of ${\bf S}^{+}(\beta ,\alpha )$.  A component of the `reducible locus' lies in the image of $F$ if and only if 
 $\theta_0, \theta _\infty$ satisfy the corresponding inequality of Observations {\rm 2.3}.
\end{proposition}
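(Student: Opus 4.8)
The plan is to analyze the map $F$ in three stages corresponding to the three claims: injectivity, surjectivity onto the irreducible locus, and the precise characterization of which reducible components are hit. Throughout I would work with the two standard forms $ST_1, ST_2$ from Proposition 2.2, since these give explicit coordinates on $\mathcal{M}(\theta_0,\theta_\infty)$, and I would use the bijection ${\bf S}^+(\beta,\alpha)\to \mathcal{R}^+(\beta,\alpha)\times T$ of Lemma 1.4 to translate statements about modules into statements about monodromy data when convenient.

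\textbf{Injectivity.} First I would show that $F$ is injective. Given $(M,N)\in {\bf S}^+(\beta,\alpha)$ in the image, the construction in Section 2 recovers the connection $(\mathcal{W},\nabla)$ from $M$ together with the prescribed invariant lattices at $z=0$ and $z=\infty$; the passage to $(\mathcal{V},\nabla)$ and the identification $\mathcal{V}=Oe_1\oplus O(-[0])e_2$ are canonical once the type $k\in\{0,1\}$ is fixed. The level structure $L$ is recovered from $N$ as the reduction of $N$ in $\widehat{\mathcal{V}}_0$, and conversely $N=\mathbb{C}((z))\otimes L$. Thus the inverse construction is well-defined on the image, giving injectivity. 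The only subtlety is the singular fibre $top_0=-{1\ 0\choose 0\ 1}$ with $\theta_0=1$, where $L$ is not determined by $M$ alone; but here $\mathcal{M}^+(1,\theta_\infty)$ was defined precisely to carry the extra datum $L$, so injectivity persists by construction.

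\textbf{Irreducible locus.} Next I would verify that every irreducible $(M,N)$ lies in the image. By the discussion preceding Section 2.1, an irreducible $M$ forces $\mathcal{W}\cong O(k)\oplus O(-k)$ with $k\in\{0,1\}$, exactly the case treated by $\mathcal{C}$ and its quotient $\mathcal{M}(\theta_0,\theta_\infty)$. For such $M$ one always has $c_1z+c_2z^2\neq 0$ (otherwise, as the Examples in Observations 2.4 show, $b=0$ would force a reducible, lower/upper-triangular form), so the pair lands in $\mathcal{C}$ and hence defines a point of $\mathcal{M}(\theta_0,\theta_\infty)$; adding the canonically determined $L$ (unique off the singular locus) produces a preimage in $\mathcal{M}^+$. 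This shows the image contains the full irreducible locus.

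\textbf{Reducible components.} The main work, and the main obstacle, is the reducible locus. Here I would invoke Observations 2.3 directly: a reducible module of type (1) or type (2) arises from $\mathcal{M}(\theta_0,\theta_\infty)$ precisely when the corresponding bundle $\mathcal{V}$ has type $O\oplus O(-1)$ rather than the `degenerate' type $O(k)\oplus O(-k-1)$ with $k\geq 1$. The computation in Observations 2.3 shows that the degenerate type occurs exactly when $\frac{\theta_0}{2}$ equals $\frac{\theta_\infty}{2}-k$ or $1-\frac{\theta_0}{2}=\frac{\theta_\infty}{2}-k$ for some $k\geq 1$, and translating these into inequalities on $(\theta_0,\theta_\infty)$ yields exactly the conditions listed there. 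Hence a reducible component is in the image of $F$ if and only if it is \emph{not} degenerate in this sense, which is precisely the stated inequality. The care required is bookkeeping: one must match each of the two projective-line components of the reducible locus of $\mathcal{R}^+(\beta,\alpha)$ (described at the end of Section 1, defined by $a_2=a_4=0$ and $a_1=a_3=0$) with the correct type, (1) or (2), and confirm that the boundary equality cases are exactly the four reducible families exhibited in the Examples of Observations 2.4. This identification of components with types, together with checking that the inequalities are both necessary and sufficient, is where the genuine content lies; the rest is the routine linear algebra already carried out in the standard forms.
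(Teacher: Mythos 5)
Your proposal is correct and follows essentially the same route as the paper's (much terser) proof: injectivity from the construction of $\mathcal{M}^+(\theta_0,\theta_\infty)$, the irreducible locus via the bundle-type argument $\mathcal{W}\cong O(k)\oplus O(-k)$ with $k\in\{0,1\}$ forcing $\mathcal{V}\cong O\oplus O(-1)$, and the reducible components by direct appeal to Observations 2.3. One cosmetic slip: the fact that an irreducible $M$ forces $c_1z+c_2z^2\neq 0$ is immediate (if $c=0$ then $Oe_1$ is a $D$-invariant subbundle, so $M$ is reducible) and is not what the Examples of Observations 2.4 establish -- those treat the case $b=0$.
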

\begin{proof} The injectivity of $F$ follows from the construction of $\mathcal{M}^+(\theta_0,\theta_\infty)$. If $M$ is irreducible then the vector bundle $\mathcal{W}$, introduced in the beginning of this section,
has type $O(k)\oplus O(-k)$ with $k\in \{0,1\}$. Therefore the subbundle $\mathcal{V}$ has type
$O\oplus O(-1)$ and can be identifies with $Oe_1\oplus O(-[0])e_2$. Thus the image of $F$ contains
the `irreducible locus' of ${\bf S}^+(\beta ,\alpha)$. The final statement follows from Observations 2.3. 
\end{proof}

{\it Define ${\bf S}^+(\theta_0,\theta_\infty)\subset {\bf S}^+(\beta ,\alpha)$ (for $\beta =e^{\pi i \theta_0}, \alpha =e^{\pi i \theta_\infty}$) to be the image of $F$ and let $\mathcal{R}^+(\theta_0,\theta_\infty)\subset \mathcal{R}^+(\beta ,\alpha)$ be the corresponding open subset.}

 \begin{corollary} $RH^{ext}:\mathcal{M}^+(\theta_0,\theta_\infty)\rightarrow \mathcal{R}^+(\theta_0,\theta_\infty)\times T$, the extended Riemann--Hilbert map,  is an analytic isomorphism.
  \end{corollary}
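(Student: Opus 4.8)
The plan is to realise $RH^{ext}$ as a composition of the two bijections already established and then to promote the resulting set bijection to an analytic isomorphism by means of smoothness together with a dimension count. First I would record that $RH^{ext}=(RH,\mathrm{pr}_t)$, where $\mathrm{pr}_t$ records the value of $t$; since $t$ is literally one of the coordinates defining $\mathcal{C}$, and hence a function on $\mathcal{M}^+(\theta_0,\theta_\infty)$, the factor $\mathrm{pr}_t$ causes no trouble. For bijectivity I would invoke Proposition 2.4, by which $F$ is a bijection of $\mathcal{M}^+(\theta_0,\theta_\infty)$ onto ${\bf S}^+(\theta_0,\theta_\infty)$, together with Lemma 2.2, which gives the bijection ${\bf S}^+(\beta,\alpha)\rightarrow\mathcal{R}^+(\beta,\alpha)\times T$. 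As $\mathcal{R}^+(\theta_0,\theta_\infty)$ is by definition the image of ${\bf S}^+(\theta_0,\theta_\infty)$ under the map of Lemma 2.2, restriction yields a bijection onto $\mathcal{R}^+(\theta_0,\theta_\infty)\times T$; composing with $F$ shows that $RH^{ext}$ is bijective.

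Next I would argue that $RH^{ext}$ is holomorphic. The factor $\mathrm{pr}_t$ is a coordinate projection, hence holomorphic. For the monodromy factor $RH$ I would appeal to the analytic (multisummation) theory of irregular singularities \cite{vdP-Si}: the formal monodromy and the four Stokes matrices attached to the connection $D$ at $z=\infty$ depend holomorphically on the entries $a_*,b_*,c_*,t$, and likewise the level structure $L$ (respectively $N$) is transported holomorphically. Since the monodromy data are constant on the $G$-orbits, $RH$ descends to the quotient; working in the two charts $ST_1$ and $ST_2$ of Proposition 2.1, and on the resolution at $\theta_0=1$, this gives holomorphy on all of $\mathcal{M}^+(\theta_0,\theta_\infty)$.

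To finish I would use the general fact that a holomorphic bijection between smooth complex manifolds of equal dimension is automatically biholomorphic: an injective holomorphic map between equidimensional complex manifolds has nowhere vanishing Jacobian and hence an open image and a holomorphic inverse. Both spaces are smooth of dimension $3$. On the source side this is Proposition 2.1 away from $\theta_0=1$ and the resolution $\mathcal{M}^+(1,\theta_\infty)\to\mathcal{M}(1,\theta_\infty)$ of Observations 2.2 at $\theta_0=1$, while the isomorphism $G\times\mathcal{M}(\theta_0,\theta_\infty)\cong\mathcal{C}$ of Proposition 2.1 gives $\dim\mathcal{M}=\dim\mathcal{C}-\dim G=6-3=3$. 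On the target side the fibres $\mathcal{R}[s,\alpha]$ are smooth surfaces except at $s=\pm2$, where Lemma 2.1 supplies the minimal resolution $\mathcal{R}^+(\pm1,\alpha)$, so $\mathcal{R}^+(\theta_0,\theta_\infty)\times T$ is smooth of dimension $2+1=3$.

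The hard part will be the behaviour at the singular fibres, that is at $\theta_0=1$, equivalently $\beta=\pm1$ and $s=\pm2$. Off these, $RH^{ext}$ is already a holomorphic bijection of smooth threefolds and nothing more is needed. At $\theta_0=1$ one must check that $RH$ carries the minimal resolution $\mathcal{M}^+(1,\theta_\infty)$ of Observations 2.2 onto the minimal resolution $\mathcal{R}^+(\pm1,\alpha)$ of Lemma 2.1, so that both bijectivity and holomorphy extend across the exceptional curves. I would argue this from the uniqueness of the minimal resolution of a surface singularity, from the fact that $RH^{ext}$ is already an isomorphism off the exceptional loci, and from the observation that the exceptional $\mathbb{P}^1$ of $\mathcal{M}^+(1,\theta_\infty)$, swept out by the $D$-invariant lines $L$ at the point where $D$ becomes scalar on $\widehat{\mathcal{V}}_0$, corresponds bijectively and holomorphically, under the identification $top_0=top_\infty$, to the exceptional $\mathbb{P}^1$ of $\mathcal{R}^+(\pm1,\alpha)$, swept out by the $top_\infty$-invariant lines $L\subset V(\infty)$ at the point where $top_\infty$ is scalar.
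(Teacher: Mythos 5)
Your proof is correct and takes essentially the same route as the paper: bijectivity is deduced from the set-theoretic bijection $\mathcal{M}^+(\theta_0,\theta_\infty)\rightarrow {\bf S}^+(\theta_0,\theta_\infty)\rightarrow \mathcal{R}^+(\theta_0,\theta_\infty)\times T$ (Proposition 2.4 together with the lemma of Section 1), and the analytic isomorphism then follows from smoothness of both sides plus the fact that an injective holomorphic map between equidimensional complex manifolds is biholomorphic onto its image. The details you add --- holomorphy of the Stokes data, the dimension count $6-3=3$, and the matching of the exceptional curves at $\theta_0=1$, i.e.\ $\beta=\pm 1$ --- are precisely what the paper's two-line proof delegates to the references \cite{vdP1,vdP2}.
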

\begin{proof} $RH^{ext}$ is bijective since ${\bf S}^+(\theta_0,\theta_\infty)\rightarrow \mathcal{R}^+(\theta_0,\theta_\infty)\times T$ is bijective. The two spaces $\mathcal{M}^+(\theta_0,\theta_\infty)$
 and  $\mathcal{R}^+(\theta_0,\theta_\infty)\times T$ are smooth and so $RH^{ext}$ is an analytic
 isomorphism (see \cite{vdP1,vdP2}). \end{proof}

\subsection{Isomonodromic families, Okamoto--Painlev\'e spaces}
 {\it An isomonodromic family above the chart $ST_2$ of $\mathcal{M}^+(\theta_0, \theta_\infty)$}  has the form $z\frac{d}{dz}+A$ with $A={a_0\ \ \ b\choose c\ \ -a_0 }$ with $c=z^2-qz$, $b=z^2+b_1z+b_0+b_{-1}z^{-1}$,
$b=z^2+(t+q)z+q(t+q)+\theta _\infty +\frac{t^2}{4}+(\frac{(a_0-\frac{1}{2})^2-(\frac{\theta_0}{2}-\frac{1}{2})^2}{q})z^{-1}$, where $a_0$ and $q$ are functions of $t$. Isomonodromy is equivalent to the existence
of an operator $\frac{d}{dt}+B$, commuting with $z\frac{d}{dz}+A$. In other terminology
$z\frac{d}{dz}+A, \frac{d}{dt}+B$ is a {\it Lax pair}. This is equivalent to the equation 
$\frac{d}{dt}(A)=z\frac{d}{dz}(B)+[A,B]$.  One observes that $B$
has trace zero and that the entries of $B$ have the form $d_{-1}(t)z+d_0(t)+d_1(t)z$. Using MAPLE
one obtains the solution $B={0\ B_1\choose B_2\ 0 }$ with
$B_1=\frac{q(q+b_1)+b_0}{2}z^{-1}+\frac{b_1+q}{2}+\frac{1}{2}z,\ B_2=\frac{1}{2}z$,  
\[q'=a_0-\frac{1}{2},\ a_0'=\frac{b_{-1}+q(q(b_1+q)+b_0)}{2} \mbox{ and the fourth Painlev\'e equation } \] 
\[q''=\frac{(q')^2}{2q}+\frac{3q^3}{2}+tq^2+\frac{q}{8}(4\theta_\infty +t^2)-\frac{(\theta_0-1)^2}{8q}
\mbox{ with parameters } \theta_0,\theta_\infty .\]

\noindent {\it Isomonodromy for reducible families}.  
An isomonodromic  family of operators $z\frac{d}{dz}+{\omega \ \ 0\choose z^2-qz\ \ -\omega }$, with $\omega =z^2+\frac{t}{2}z+\frac{d}{2}$, commutes with an operator of the form
$\frac{d}{dt}+{\tau \ \ \  0\choose \frac{z}{2}\ -\tau }$. One computes that $\tau =\frac{2z+2q+t}{4}$
and  $q'=q^2+\frac{t}{2}q+\frac{d-1}{2}$. Then $q$ is a Riccati solution of ${\rm PIV}$ with $d=\theta_\infty$ and $d=1\pm (\theta_0-1)$.\\

 \noindent   
An isomonodromic family $z\frac{d}{dz}+{-\omega \ \ 0\choose z^2-qz\ \ \omega }$ with $\omega =z^2+\frac{t}{2}z+\frac{d}{2}$ produces the equation $q'=-q^2-\frac{t}{2}q-\frac{d+1}{2}$. Then $q$ is a Riccati solution of
${\rm PIV}$ with $d=\theta _\infty$ and $d=-1\pm (\theta _0-1)$.

\begin{observations} The solutions $q_r$ with  $r\in \mathcal{R}^+(\theta_0,\theta_\infty)$.\\
{\rm The fibre of $\mathcal{M}^+(\theta_0,\theta_\infty)\rightarrow  \mathcal{R}^+(\theta_0,\theta_\infty)$ above $r$ is, by Corollary 2.5, isomorphic to $T$. Write $q_r$ for the function $q$ appearing in the formula for
the chart $ST_2$. Then $q_r$ is a meromorphic solution of PIV, defined on all of $T$.

}\end{observations}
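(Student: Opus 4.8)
The plan is to use Corollary~2.5 to turn the Painlev\'e property into a soft statement about composing holomorphic maps, so that the only substantial input is the completeness of the isomonodromic flow already supplied by that corollary.

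First I would fix $r\in\mathcal{R}^+(\theta_0,\theta_\infty)$. By Corollary~2.5 the map $RH^{ext}$ is an analytic isomorphism onto $\mathcal{R}^+(\theta_0,\theta_\infty)\times T$, so the isomonodromic leaf $(RH^{ext})^{-1}(\{r\}\times T)$ is the image of a holomorphic section $\sigma_r\colon T\to\mathcal{M}^+(\theta_0,\theta_\infty)$, $t\mapsto m(t)$, that is a biholomorphism onto this leaf. The decisive point is that $\sigma_r$ is defined on \emph{all} of $T=\mathbb{C}$: the leaf through any point is a graph over the whole $t$-line and never runs off the moduli space. This completeness of the flow is the one substantial ingredient, and it is already packaged inside Corollary~2.5; everything that follows is formal.

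Next I would promote $q$ to a morphism $Q\colon\mathcal{M}^+(\theta_0,\theta_\infty)\to\mathbb{P}^1$. On $\mathcal{C}$ one has $c=c_1z+c_2z^2\neq0$, hence $(c_1,c_2)\neq(0,0)$, and the base change $e_2\mapsto\lambda e_2+(x_0+x_1z^{-1})e_1$ defining $G$ multiplies both $c_1$ and $c_2$ by $\lambda^{-1}$. Therefore the ratio $q=-c_1/c_2$ is $G$-invariant and descends to a well-defined morphism $Q$ to $\mathbb{P}^1$: on $ST_2$ (where $c_2\neq0$, normalised to $c_2=1$) it is the finite coordinate $-c_1$, while on $ST_1\setminus ST_2$ (where $c_2=0$, $c_1\neq0$) it takes the value $\infty$ and $1/q=-c_2$ is a local holomorphic coordinate vanishing there. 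I would then check that $Q$ extends across the singular fibre at $\theta_0=1$ of Proposition~2.1 (after the minimal resolution) and across the reducible locus of Observations~2.3; this is the only mildly delicate bookkeeping, but it is routine from the explicit charts of Proposition~2.1.

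Finally I would compose. The map $q_r:=Q\circ\sigma_r\colon\mathbb{C}\to\mathbb{P}^1$ is holomorphic as a composite of holomorphic maps, hence a meromorphic function on all of $T=\mathbb{C}$; its poles occur exactly at the isolated points $t$ where $m(t)$ meets $\{c_2=0\}$, and there $1/q_r=-\,c_2\circ\sigma_r$ is holomorphic and vanishing, so these are genuine poles. Since the family lies generically in $ST_2$, we have $q_r\not\equiv\infty$. That $q_r$ solves PIV is the Lax-pair computation already carried out over the chart $ST_2$ in this subsection: on the dense open set $\{t:m(t)\in ST_2\}$ the relation $\frac{d}{dt}(A)=z\frac{d}{dz}(B)+[A,B]$ produces exactly the displayed fourth Painlev\'e equation for $q_r$. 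Clearing the denominator turns PIV into the polynomial differential identity $q\,q''=\tfrac12(q')^2+\tfrac32 q^4+tq^3+\tfrac18(4\theta_\infty+t^2)q^2-\tfrac18(\theta_0-1)^2$, whose two sides are meromorphic in $t$ and agree on a dense open set; by the identity theorem they agree everywhere, so $q_r$ is a meromorphic solution of PIV on all of $T$. The main obstacle, completeness of the isomonodromic flow, has thus been absorbed into Corollary~2.5, leaving only this routine composition and continuation argument.
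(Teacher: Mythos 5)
Your overall route is the same as the paper's implicit one: Corollary 2.5 turns the fibre over $r$ into a holomorphic section $\sigma_r\colon T\to\mathcal{M}^+(\theta_0,\theta_\infty)$, the Lax-pair computation of \S 2.2 over $ST_2$ yields PIV, and continuation does the rest. Your main addition---that $G$ rescales $(c_1,c_2)$ by $\lambda^{-1}$, so that $q=[-c_1:c_2]$ descends to a morphism $Q\colon\mathcal{M}^+(\theta_0,\theta_\infty)\to\mathbb{P}^1$---is correct and is exactly the right way to make ``meromorphic on all of $T$'' precise.

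The gap is the sentence ``Since the family lies generically in $ST_2$, we have $q_r\not\equiv\infty$.'' This is not routine; it is the only point of substance left once Corollary 2.5 is granted, and it can actually fail: nothing prevents an entire fibre of $\mathcal{M}^+\to\mathcal{R}^+$ from lying in the $G$-stable divisor $\{c_2=0\}$, on which $Q\equiv\infty$. Concretely, for $\theta_0\in\{0,2\}$ take in the chart $ST_1$ the curve $a_2=1,\ c_2=0,\ b_0=0$ with $t$ free, i.e.
\[
A=\begin{pmatrix} z^2 & \bigl(\theta_\infty+\tfrac{t^2}{4}\bigr)z+tz^2\\ z & -z^2\end{pmatrix},
\qquad
B=\begin{pmatrix} \tfrac{z}{2} & \tfrac{1}{2}\bigl(1+\theta_\infty+\tfrac{t^2}{4}\bigr)+\tfrac{t}{2}z\\ \tfrac{1}{2} & -\tfrac{z}{2}\end{pmatrix}.
\]
This $A$ satisfies the four equations cutting out $\mathcal{C}$ precisely because $\tfrac{\theta_0}{2}(\tfrac{\theta_0}{2}-1)=0$, and one checks that $\tfrac{d}{dt}A=z\tfrac{d}{dz}B+[A,B]$ holds identically in $(z,t)$. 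So this curve is an isomonodromic family; since it is parametrized by all of $T$ and $RH^{ext}$ is injective, it is a \emph{whole fibre} of $\mathcal{M}^+\to\mathcal{R}^+$, entirely contained in $\{c_2=0\}$. (In scalar form it is the translation family $f''=\bigl((z+\tfrac{t}{2})^2+\theta_\infty+1\bigr)f$ of a fixed Weber equation; $z=0$ is an apparent singularity of the system, which is how the leaf manages to avoid $ST_2$.) Its monodromy point $r_0$ genuinely lies in $\mathcal{R}^+(\theta_0,\theta_\infty)$: the monodromy at $0$ is trivial, so $top_\infty=1$, and for $\alpha=e^{\pi i\theta_\infty}\neq 1$ this forces $a_2a_3=\alpha^{-1}-1\neq 0$, $a_1=-\alpha a_3$, $a_4=-\alpha a_2$, i.e.\ all four Stokes entries are nonzero; hence the module is irreducible and Proposition 2.4 applies. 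For this $r_0$ your $q_{r_0}$ is identically $\infty$ and is not a meromorphic solution of PIV.

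So the step needs both a proof and, strictly speaking, a hypothesis: one must show that no fibre is contained in $\{c_2=0\}$, which is a genuine, parameter-dependent assertion (a Laurent-expansion analysis at $z=0$ of a putative Lax partner along $\{c_2=0\}$ forces $\tfrac{\theta_0}{2}(\tfrac{\theta_0}{2}-1)=\tfrac{k(k+2)}{4}$ for some integer $k\geq 0$, so the claim does hold for $\theta_0\notin\mathbb{Z}$, but some such argument is indispensable). In fairness, the paper's Observation is equally silent here, and the paper only ever invokes $q_r$ (in the proof of Theorem 2.7) for fibres passing through points of $ST_2$ with $q\neq 0$, where the issue is vacuous; but a proof of the Observation for \emph{every} $r$, which is what you set out to give, must confront exactly this point, and labelling it ``generic''/``routine'' is where your argument breaks.
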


\begin{theorem} The fourth Painlev\'e equation has the Painlev\'e property. The moduli space
$\mathcal{M}^+(\theta_0,\theta_\infty)$ is analytically isomorphic to  the Okamoto--Painlev\'e space for 
${\rm PIV}$ with parameters $\theta_0, \theta_\infty$.\end{theorem}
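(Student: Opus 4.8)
The plan is to establish two separate assertions: first the Painlevé property for PIV, and second the identification of $\mathcal{M}^+(\theta_0,\theta_\infty)$ with the Okamoto--Painlevé space. For the Painlevé property, I would argue as follows. By Corollary 2.5 the extended Riemann--Hilbert map $RH^{ext}:\mathcal{M}^+(\theta_0,\theta_\infty)\rightarrow \mathcal{R}^+(\theta_0,\theta_\infty)\times T$ is an analytic isomorphism. Fix a point $r\in \mathcal{R}^+(\theta_0,\theta_\infty)$; then the fibre $(RH^{ext})^{-1}(\{r\}\times T)$ is an isomonodromic family parametrized by $t\in T=\mathbb{C}$, and it is analytically isomorphic to $T$ via $RH^{ext}$. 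This isomorphism means the family is a globally defined analytic section over all of $T$. Reading off the coordinate $q$ from the chart $ST_2$ along this section gives, by Observations 2.6 together with the Lax-pair computation in Section 2.1, the solution $q_r$ of PIV. Because the section exists over the whole of $T=\mathbb{C}$ and $q_r$ is the restriction of a global coordinate on a smooth space, $q_r$ extends to a meromorphic function on all of $\mathbb{C}$: its only singularities are poles, arising where the chart $ST_2$ fails (i.e.\ where one must pass to $ST_1$), and these are the movable poles permitted by the Painlevé property. Thus every solution obtained from an initial condition continues meromorphically along any path in $T$, which is exactly the Painlevé property.

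For the identification with the Okamoto--Painlevé space, I would proceed by matching the defining structural features on both sides. The Okamoto--Painlevé space for PIV is characterized as the (parametrized) space of initial conditions: a smooth surface fibred over $T$, obtained from a rational surface by blowing up and removing an anticanonical divisor, on which the PIV flow is everywhere regular and complete. The strategy is to exhibit $\mathcal{M}^+(\theta_0,\theta_\infty)$ as such a space by verifying the three essential properties. First, smoothness: Proposition 2.2 gives smoothness of $\mathcal{M}(\theta_0,\theta_\infty)$ for $\theta_0\neq 1$, and Observations 2.1 show that in the exceptional case $\theta_0=1$ the passage to $\mathcal{M}^+(1,\theta_\infty)$ is precisely the minimal resolution of the unique singular point, so $\mathcal{M}^+(\theta_0,\theta_\infty)$ is smooth for all parameter values. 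Second, the fibration structure: the charts $ST_1$ and $ST_2$ from Proposition 2.2, each fibred over $t$, give explicit affine coordinates, and the PIV vector field written in Section 2.1 (the flow $q'=a_0-\tfrac12$, $a_0'=\cdots$) is a regular vector field on this surface for each fixed $t$. Third, completeness of the flow and the space-of-initial-conditions property: this is supplied by the analytic isomorphism of Corollary 2.5, since the isomonodromic fibres are exactly the integral curves of the PIV flow and each is isomorphic to the full time-line $T$.

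The cleanest way to package the identification is to compare $\mathcal{M}^+(\theta_0,\theta_\infty)$ with Okamoto's space through their common characterization as the space of initial conditions for the PIV Hamiltonian system. The explicit Lax pair and the resulting PIV equation in Section 2.1 show that the coordinate $q$ and the accessory $a_0$ (equivalently the Hamiltonian conjugate $p$) furnish canonical coordinates in which the isomonodromy flow is the Painlevé Hamiltonian flow. One then checks that the compactification and blow-up data encoded in the gluing of $ST_1$ and $ST_2$, together with the resolution described in Observations 2.1, reproduce Okamoto's configuration of $(-2)$-curves and the anticanonical divisor at infinity. The reducible-locus analysis of Observations 2.3 and Proposition 2.4 accounts for the special points corresponding to Riccati and rational solutions, matching the vertical leaves and the boundary divisors in Okamoto's picture.

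I expect the main obstacle to be the third step: establishing that $\mathcal{M}^+(\theta_0,\theta_\infty)$ is genuinely the Okamoto--Painlevé \emph{space of initial conditions} rather than merely a smooth surface carrying a PIV solution. Verifying smoothness and writing the flow is routine once the standard forms are in hand, but proving that the two families of fibres coincide \emph{as foliations}---i.e.\ that the isomonodromic foliation agrees with Okamoto's Hamiltonian foliation and that the fibres are complete, so no solution escapes to a puncture in finite time---requires a careful comparison of the boundary behaviour encoded in the transition between $ST_1$ and $ST_2$ with Okamoto's blown-up boundary divisor. The analytic isomorphism of Corollary 2.5 does most of the heavy lifting here, since it forces each fibre to be isomorphic to all of $T$; the remaining work is the essentially combinatorial-geometric matching of the two compactifications, which I would carry out by comparing the explicit divisor configurations rather than by any further analysis of the differential equation itself.
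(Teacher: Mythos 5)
There is a genuine gap in your argument for the Painlev\'e property, and it sits exactly where the paper's own proof does its real work. What you establish is that each solution $q_r$ obtained from an isomonodromic fibre $(RH^{ext})^{-1}(\{r\}\times T)$ is meromorphic on all of $T=\mathbb{C}$. That is the content of Observations 2.6 and is correct. But the Painlev\'e property is a statement about \emph{every} local solution of PIV, and your concluding sentence (``thus every solution obtained from an initial condition continues meromorphically along any path in $T$'') is a non sequitur: nothing in your argument shows that an arbitrary local solution $Q$, holomorphic on some disk $U$, coincides with one of the $q_r$. A priori you have only exhibited a two-parameter family of globally meromorphic solutions, which does not exclude other solutions with worse singularities. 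The paper supplies precisely this bridge: given $Q$ holomorphic and nonzero on $U$, substitute $Q$ and $\tilde a_0=\frac{dQ}{dt}+\frac12$ into the explicit isomonodromic normal form of Section 2.1 to get an analytic map $U\rightarrow \mathcal{M}^+(\theta_0,\theta_\infty)$; because $Q$ satisfies PIV, the Lax-pair equivalence (isomonodromy $\Leftrightarrow$ PIV for families in this form) shows the composite $U\rightarrow \mathcal{R}^+(\theta_0,\theta_\infty)$ is \emph{constant}, say equal to $r$; then Corollary 2.5 (the fibre over $r$ is a section over $T$) forces $Q=q_r$ on $U$, so $Q$ extends meromorphically to all of $\mathbb{C}$. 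Alternatively you could argue via initial conditions, noting that $(q,a_0)$ with $a_0=q'+\tfrac12$ are coordinates on the chart $ST_2$, and invoke local uniqueness for the ODE to identify $Q$ with the $q_r$ through the same initial datum; but some such step identifying \emph{all} local solutions with the $q_r$ must be made explicit, and your proposal omits it.

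On the second assertion your route also diverges from the paper's, in the direction of doing much more than is required. The paper's identification is essentially soft: by Corollary 2.5 the bundle $\mathcal{M}^+(\theta_0,\theta_\infty)\rightarrow T$, equipped with the foliation by the fibres of $\mathcal{M}^+(\theta_0,\theta_\infty)\rightarrow \mathcal{R}^+(\theta_0,\theta_\infty)$, \emph{is} the Okamoto--Painlev\'e variety, with $\mathcal{R}^+(\theta_0,\theta_\infty)$ playing the role of the space of initial conditions; no comparison of compactifications is performed. Your plan to match Okamoto's configuration of $(-2)$-curves and the anticanonical boundary divisor against the gluing of $ST_1$ and $ST_2$ and the resolution of Observations 2.1 is left entirely as ``one then checks,'' so as written it is a program rather than a proof; carried out, it would yield a finer geometric statement than the theorem needs, but it cannot substitute for the missing argument in the first part.
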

\begin{proof} Let a local solution $Q$ of PIV with parameters $\theta_0,\theta_\infty$ be given.
Let $U$ be an open disk, where $Q$ is holomorphic and has no zeros.
 Consider the operator 
$z\frac{d}{dz}+{\tilde{a}_0\ \ \ \tilde{b}  \choose z^2-Qz\ -\tilde{a}_0}$ with $\tilde{a}_0=\frac{dQ}{dt}+\frac{1}{2}$,
$\tilde{b}=z^2+(t+Q)z+Q(t+Q)+\theta_\infty +\frac{t^2}{4}+
\frac{(\tilde{a}_0-\frac{1}{2})^2-(\frac{\theta_0}{2}-\frac{1}{2})^2}{Q}z^{-1}$. This defines an analytic
map $U\rightarrow \mathcal{M}^+(\theta_0,\theta_\infty)$. Since $Q$ is a local solution of PIV,
the map $U\rightarrow \mathcal{M}^+(\theta_0,\theta_\infty)\rightarrow \mathcal{R}^+(\theta_0,\theta_\infty)$ is constant. Let $r$ be its image. Then $Q$ coincides with $q_r$ on $U$. Thus $Q$ extends
to a global solution of PIV and this equation has the Painlev\'e property. 

The bundle $\mathcal{M}^+(\theta_0,\theta_\infty)\rightarrow T$, with its foliation defined by the fibres
of $\mathcal{M}^+(\theta_0,\theta_\infty)\rightarrow \mathcal{R}^+(\theta_0,\theta_\infty)$, is the 
Okamoto--Painlev\'e variety according to the isomorphism of Corollary 2.5. \end{proof}
We note that
$ \mathcal{R}^+(\theta_0,\theta_\infty)$ is the {\it space of initial conditions}.

\subsection{$Aut({\bf S}^{+})$ and  B\"acklund transformations}
Natural automorphisms of  ${\bf S}^{+}$ are:\\
(1). $\sigma _1:(M,N)\mapsto (M,N^*)$ where $N^*$ is a submodule of  $\mathbb{C}((z))\otimes M$ such that $N\oplus N^*=\mathbb{C}((z))\otimes M$. This is well defined for $\beta \neq \pm 1$. 
For $\beta =\pm 1$, the module $N^*$ might not exist or might not be unique.  It seems correct to
define $N^*:=N$ for $\beta =\pm 1$. \\
(2). $\sigma _2:(M,N)\mapsto (M\otimes A,N\otimes A)$, where $A=\mathbb{C}((z))a$ and $\delta a=\frac{1}{2}a$.\\
(3). $\sigma _3:(M,N)\mapsto \mathbb{C}(z)\otimes _\phi (M,N)$, where $\phi$ is the automorphism of 
$\mathbb{C}(z)$ which is the identity on $\mathbb{C}$ and maps $z$ to $iz$. Let $Aut({\bf S}^{+})$ denote the group generated by $\sigma _j,\ j=1,2,3$.
\begin{center}
\begin{tabular}{|c|c|c|c|c|}
\hline
                     & $\beta$ & $\alpha $ & $t$ & $z$ \\
\hline
$\sigma _1$&  $\beta ^{-1}$ & $\alpha $ & $t$ & $z$ \\
\hline
$\sigma _2$ &$-\beta$ & $-\alpha $ & $t$ & $z$ \\
\hline
$\sigma _3$ & $\beta$ & $\alpha ^{-1} $ & $it$ & $iz$ \\
\hline
\end{tabular}
\end{center}
The above group is commutative and has order 16. The following table is a choice of lifting the generators to actions on $\theta_0,\theta_\infty, t,z$.
\begin{center}
\begin{tabular}{|c|c|c|c|c|}
\hline
                     & $\theta _0$ & $\theta _\infty $ & $t$ & $z$ \\
\hline
$\tilde{\sigma} _1$&  $2-\theta_0 $ & $\theta_\infty $ & $t$ & $z$ \\
\hline
$\tilde{\sigma}_2$ &$\theta_0 +1$ & $\theta_\infty +1 $ & $t$ & $z$ \\
\hline
$\tilde{\sigma}_3$ & $\theta _0$ & $-\theta_\infty $ & $it$ & $iz$ \\
\hline

\end{tabular}
\end{center}

The induced morphisms  $\tilde{\sigma} _1:\mathcal{M}^+(\theta _0,\theta_\infty)
\rightarrow \mathcal{M}^+(-\theta_0 +2,\theta _\infty )$ and
 $\tilde{\sigma} _3:\mathcal{M}^+(\theta _0,\theta_\infty) \rightarrow \mathcal{M}^+(\theta_0,-\theta _\infty )$ are evident from the standard operators representing the points of
  $\mathcal{M}^+(\theta _0,\theta_\infty)$.
A MAPLE computation yields the explicit morphisms $\tilde{\sigma} _2:\mathcal{M}^+(\theta _0,\theta_\infty) \rightarrow \mathcal{M}^+(\theta_0 +1,\theta _\infty +1)$. 
The formulas are given with respect to the coordinates $a=a_0,q$ of an open subset (namely $q\neq 0$ in the chart $ST_2$) of the first space and
 $\tilde{a}=\tilde{a}_0,\tilde{q}$ of an open subset of the second space.
The assumption that the operator $z\frac{d}{dz}+A(a,q,\theta_0, \theta _\infty,z)$, belonging to
$\mathcal{M}^+(\theta _0,\theta_\infty)$, is equivalent, by a transformation of the type $U_{-2}z^{-2}+U_{-1}z^{-1}+U_0+U_1z+U_2z^2$, to the operator
 $z\frac{d}{dz}+A(\tilde{a},\tilde{q},\theta_0 +1, \theta _\infty +1,z)$, belonging to   $\mathcal{M}^+(\theta_0 +1,\theta _\infty +1)$, leads to the following formulas
\[\tilde{q}=\frac{-4q^2\theta_\infty +4a^2-4q^3t-q^2t^2-4q^4-4q^2\theta_0+\theta_0^2-4a\theta_0}
{4q(qt-\theta_0+2a+2q^2)} \]
\[\tilde{a}=\frac{long}{16q^2(qt-\theta _0+2a+2q^2)^2}.\]
The substitution $a=q'+\frac{1}{2}$ in the first formula produces $\tilde{q}$ in terms of $q,q'$ and the parameters $\theta_0,\theta_\infty$, this is {\it the B\"acklund transformation in terms of solutions}.
 The second formula is obtained by substitution $\tilde{a}=\tilde{q}'+\frac{1}{2}$
and an expression for $\tilde{q}'$ coming from the first formula and the equation for $q''$. \\

The term $qt-\theta_0+2a+2q^2$ 
in the denominator of the formulas indicates that the morphism $\tilde{\sigma }_3$ is in
general a rational equivalence and is not defined  on leaves of the foliation with $a=q'+\frac{1}{2}$ and
$q'+q^2+\frac{t}{2}q+\frac{-\theta_0+1}{2}=0$. This occurs precisely when  
$\theta_0=-\theta_\infty$  and the reducible locus of $\mathcal{M}^+(\theta_0,\theta_\infty)$
is not present in the corresponding $\mathcal{M}^+(1+\theta_0,1+\theta_\infty)$ (compare Observations 2.3 and the Riccati equations for reducible families).

\bigskip

We note that the  group $<\tilde{\sigma} _1,\tilde{\sigma }_2,\tilde{\sigma} _3>$ contains the two shifts 
$\theta_0\mapsto \theta _0+2,\  \theta_\infty \mapsto \theta _\infty$ and 
$\theta_0\mapsto \theta_0,\  \theta _\infty \mapsto \theta _\infty +2$. 
One observes that, in comparison  with the book of Gromak et al. \cite{Gr} and Okamoto's paper \cite{O3}, there is still a missing generator for the group of all B\"acklund transformations of PIV. This generator does
not seem to come from a `natural' automorphism of ${\bf S}^+$ (i.e., constructions of linear algebra for differential modules and operations with  the differential field $\mathbb{C}(z)$ ).   In the final section we will investigate another set of differential modules $\bf S$, inspired by Noumi's symmetric form of PIV (\cite{No,No-Y}). As is shown by Noumi, this will easily produce all B\"acklund transformations and moreover all rational and Riccati solutions.

\section{The Noumi--Yamada family}
M.~Noumi  and Y.~Yamada produced a $3\times 3$-Lax pair, arising from the Lax formalism of the modified KP hierarchy,  for the symmetric form of ${\rm PIV}$, namely
\[z\frac{d}{dz}+\left(\begin{array}{ccc}\epsilon _1 &f_1 & 1 \\ z&\epsilon _2 & f_2\\ f_0z&z &\epsilon _3 \end{array}\right),\
\frac{d}{dt} +\left(\begin{array}{ccc} -q_1 &1 & 0 \\ 0 &-q_2 &1 \\ z &0 &-q_3 \end{array}\right),
\mbox{ leading to equations}  \]
 \begin{small}
\[\epsilon _1'=\epsilon_2'=\epsilon_3'=0; \ f_1-f_2=-q_1+q_3;\ f_2-f_0=q_1-q_2;\ f_0-f_1=q_2-q_3,\]
\[f_0'=f_0(f_1-f_2)+(1-\epsilon_1+\epsilon_3);
\ f_1'=f_1(f_2-f_0)+(\epsilon _1-\epsilon _2);\
f_2'=f_2(f_0-f_1)+(\epsilon _2-\epsilon _3).\]
\end{small}
Since the local exponents $\epsilon _*$ at $z=0$ are constants in an isomonodromic  family, 
we can and will suppose $\epsilon_1+\epsilon _2+\epsilon _3=0$. Then we may and will also suppose that $q_1+q_2+q_3=0$. Further $t=f_0+f_1+f_2$ is assumed.

Then $f_1$ satisfies $y''=\frac{(y')^2}{2y}+\frac{3}{2}y^3-2ty^2+(\frac{t^2}{2}+\theta_\infty)y-
\frac{(\theta_0-1)^2}{2y}$ with $\theta_0=1+\epsilon_1-\epsilon _2$, $\theta_\infty=1+\epsilon _1-\epsilon_3$. After rescaling $t\mapsto \frac{t}{ \sqrt{2}},\ y\mapsto -\sqrt{2}y$ one obtains   `our' equation
  $y''=\frac{(y')^2}{2y}+\frac{3}{2}y^3+ty^2+(t^2+4\theta_\infty )\frac{y}{8}-\frac{(\theta_0-1)^2}{8y}$.

 Using this symmetric form one finds the extended Weyl group of $A_2$ as group of B\"acklund transformations.
For example $\pi :(f_0,f_1,f_2)\mapsto (f_1,f_2,f_0)$ translates into the `missing' B\"acklund transformation
of \S 2.3 , namely \\
$\theta _0\mapsto -\frac{1}{2}\theta _0+\frac{1}{2}\theta _\infty +2,\ \ \ \theta _\infty \mapsto -\frac{3}{2}\theta_0-\frac{1}{2}\theta_\infty +2.$\\

This is the inspiration for the new class $\bf S$ of differential modules $M$ over $\mathbb{C}(z)$, defined by:
$\dim M=3$; $\Lambda ^3M$ is trivial; the only singular points are $0,\infty$; $0$ is regular singular and the Katz
invariant of $\infty$ is $\frac{2}{3}$. After scaling the variable $z$ the generalized eigenvalues at $\infty$ are:\\  $q_0=z^{2/3}+\frac{t}{3}z^{1/3},\ q_1=\zeta ^2z^{2/3}+\zeta \frac{t}{3}z^{1/3},\ 
q_2=\zeta z^{2/3}+\zeta ^2\frac{t}{3}z^{1/3}$ where $\zeta =e^{2\pi i/3}$.\\

\noindent {\it Invariant lattices at $z=\infty$}.
For $M\in {\bf S}$ the operator $D=\nabla _{z\frac{d}{dz}}$ has at $z=\infty$ has the form 
$z\frac{d}{dz}+diag(q_0,q_1,q_2)$ with respect to a basis $e_0,e_1,e_2$. A lattice $\Lambda$ at $z=\infty$ is called {\it  invariant} if $z^{-1}D(\Lambda )\subset \Lambda$. We respect to the basis 
$h_0=e_0+e_1+e_2,\ h_1=z^{1/3}(e_0+\zeta e_1+\zeta ^2e_2),\ h_2=z^{-1/3}(e_0+\zeta ^2e_1+\zeta e_2)$, 
$D$ has the form $z\frac{d}{dz}+\left(\begin{array}{ccc}    0 & z & \frac{t}{3} \\ \frac{t}{3} & \frac{1}{3}& 1 \\ z & \frac{t}{3}z& -\frac{1}{3}\end{array}\right)$.
Thus $\Lambda _0:=<h_0,h_1,h_2>$ is an invariant lattice. $\Lambda _1:=<h_0,z^{-1}h_1,h_2>$ 
is the only invariant lattice of codimension 1 in $\Lambda_0$ and the operator $D$ has with respect to this basis the form
$z\frac{d}{dz}+\left(\begin{array}{ccc} 0&1 &\frac{t}{3} \\ \frac{t}{3}z & -\frac{2}{3}& z \\ z &\frac{t}{3} &-\frac{1}{3} \end{array}\right)$.
The invariant lattice $\Lambda _2:=<z^{-1}h_0,z^{-1}h_1,h_2>$ has codimension 2 in $\Lambda _0$.
 All invariant lattices are  $\{z^n\Lambda _i| \  n\in \mathbb{Z},\ i= 0,1,2\}$.  \\

 The Noumi--Yamada Lax pair has one additional feature, namely:\\
 there exists  $U\in {\rm GL}(3,\mathbb{C}[[z]])$ with $U=1+U_1z+U_2z^2+\dots$ such that
 \[U^{-1}\{ z\frac{d}{dz}+\left(\begin{array}{ccc}\epsilon _1 &f_1 & 1 \\ z&\epsilon _2 & f_2\\ f_0z&z &\epsilon _3 \end{array}\right)\} U=z\frac{d}{dz}+
 \left(\begin{array}{ccc}\epsilon _1 &* & * \\ 0&\epsilon _2 & *\\ 0 & 0&\epsilon _3 \end{array}\right),
\mbox{ with  all }*\in \mathbb{C}.\]

\noindent {\it Level structure for $\bf S$}. This leads to a `level structure' or `parabolic structure' for the elements $M\in {\bf S}$ consisting of differential submodules  $M_1\subset M_2\subset \mathbb{C}((z))\otimes M$ of dimensions 1 and 2 over $\mathbb{C}((z))$. Let ${\bf S}^+$ denote the set of the differential modules in $\bf S$, provided with a level structure.  \\

\noindent {\it The moduli space $\mathcal{R}$ for the analytic data}.\\ 
 The singular directions $d$ for $q_k-q_\ell$ are computed as follows. $z\frac{d}{dz}(y)=(q_k-q_\ell)y$ has solution
 $\exp (\frac{3}{2}(\zeta ^{2k}-\zeta ^{2\ell})z^{2/3}+3(\zeta ^k-\zeta ^\ell)z^{1/3} )$. Write $z=e^{id}$ and 
 $\zeta ^{2k}-\zeta ^{2\ell} =|\zeta ^{2k}-\zeta ^{2\ell}|e^{i\phi(k,\ell)}$. Then $|y(re^{id})|$ has maximal descent for $r\rightarrow \infty$ if and only if $\frac{2}{3}d+\phi (k,\ell )=\pi +\mathbb{Z}2\pi$ (or $d=\frac{3}{2}\pi -\frac{3}{2}\phi (k,\ell)+\mathbb{Z}3\pi$).
\begin{small} \begin{center}
\begin{tabular}{|c|c|c|c|}
\hline
$k$&$\ell$&$\phi$&$d$\\
\hline
$0 $& $ 1$& $\frac{1}{6} \pi $ & $\frac{5}{4} \pi $ \\
\hline
$ 1$& $0 $& $\frac{7}{6} \pi $ & $\frac{11}{4} \pi$ \\
\hline
$ 0$& $2 $& $\frac{11}{6}\pi $ & $\frac{7}{4}\pi $ \\
\hline
$ 2$& $0 $& $\frac{5}{6}\pi $ & $\frac{1}{4}\pi $ \\
\hline
$ 1$& $2 $& $\frac{9}{6}\pi $ & $\frac{9}{4}\pi $ \\
\hline
$ 2$& $1 $& $\frac{3}{6}\pi $ & $\frac{3}{4}\pi $ \\
\hline

\end{tabular}
\end{center} 
\end{small}
The analytic data consists of the formal monodromy and the six Stokes matrices at $z=\infty$.
The product of the formal monodromy and the Stokes matrices  for the singular directions $d\in [0,2\pi )$
\[\left(\begin{array}{ccc} 0&0 & 1 \\ 1 &0 &0 \\ 0 &1 & 0\\ \end{array}\right)
\left(\begin{array}{ccc}1 &0 &0 \\ 0&1 &0 \\ x_4&0 &1 \\ \end{array}\right)
\left(\begin{array}{ccc} 1&0 & 0\\  x_3&1 &0 \\ 0 & 0& 1\\ \end{array}\right)
\left(\begin{array}{ccc} 1 &0 & 0\\ 0 & 1 & x_2\\ 0 & 0& 1\\ \end{array}\right)
\left(\begin{array}{ccc} 1 &0 & x_1 \\ 0 & 1& 0\\ 0 &0 &1 \\ \end{array}\right) \]
is equal to the topological monodromy $M(x_*):=\left(\begin{array}{ccc} x_4&0 & x_1x_4+1 \\ 1 &0 &x_1 \\ x_3 &1 & x_1x_3+x_2  \end{array}\right)$.
Its characteristic polynomial is
 $\lambda ^3-(x_2+x_4+x_1x_3)\lambda ^2+(-x_1-x_3+x_2x_4)\lambda -1.$\\
 \indent {\it The  moduli space $\mathcal{R}$} for the analytic data consists of the tuples $x_*=(x_1,\dots ,x_4)$ since the other two Stokes matrices can be expressed in the Stokes matrices for the singular directions
in $[0,2\pi )$.  Thus $\mathcal{R}\cong \mathbb{A}^4$. The elements of {\it the parameter space $\mathcal{P}$} are the sets of eigenvalues of the topological monodromy. Thus 
$\mathcal{P}=\{\lambda ^3-e_1\lambda ^2+e_2\lambda -1|\ e_1,e_2\in \mathbb{C}\}$. Let $\mathcal{R}(P)$ be
the fibre above $P\in \mathcal{P}$. If $P$ has three distinct roots, then $\mathcal{R}(P)$
is a smooth surface. If $P$ has roots $a,a,a^{-2}$ with $a\neq a^{-2}$, then the point  $x_*\in \mathcal{R}(P)$ with
$x_*\neq (-a^{-1} ,a ,-a^{-1} ,a )$ is regular and $M(x_*)$ has two Jordan blocks. The point 
$(-a^{-1} ,a ,-a^{-1} ,a )\in \mathcal{R}(P)$ is singular and has type $A_1$. Further 
$M(-a^{-1} ,a ,-a^{-1} ,a )$ has three Jordan blocks. If  $P$ has roots $a,a,a$ (and thus $a^3=1$), then the point
 $x_*\in \mathcal{R}(P)$ with $x_*\neq (-a^{-1} ,a ,-a^{-1} ,a )$ is regular and $M(x_*)$ has one Jordan block.
 The point $x_*=(-a^{-1} ,a ,-a^{-1} ,a )$ is singular and has type $A_2$. The matrix $M(-a^{-1} ,a ,-a^{-1} ,a )$ has
 two Jordan blocks. \\

 \noindent {\it Level structure for $\mathcal{R}$ and $\mathcal{P}$}. For an element of 
 $\mathcal{R}$ we introduce a `level structure' which consists of subspaces
 $L_1\subset L_2\subset \mathbb{C}^3$ of dimensions 1 and 2 which are invariant under the topological monodromy (at $z=\infty$ or, equivalently, at $z=0$). The corresponding space is  denoted
 by $\mathcal{R}^+$. The level structure for a $P\in \mathcal{P}$ consists of a tuple
 $(\mu_1,\mu_2,\mu_3)$ with $\mu_1\mu_2\mu_3=1$ and 
 $P=\prod _{j=1}^3(\lambda-\mu_j)$. The corresponding space is denoted by $\mathcal{P}^+$. 
 The morphism $par:\mathcal{R}^+\rightarrow \mathcal{P}^+$ is defined by $((x_*),L_1,L_2)\mapsto
 (\mu_1,\mu_2,\mu_3)$, where $\mu_1$ is the eigenvalue of $M(x_*)$ on $L_1$ and $\mu_2$ is that of  $M(x_*)$ on $L_2/L_1$. 
 
 \bigskip
 
One observes that $\mathcal{R}^+$ is the closed subspace of
$ \mathbb{C}^4_{x_*}\times \mathbb{P}^2_{y_*}\times (\mathbb{P}^2)^*_{z_*}\times \mathcal{P}^+$,
where $\mathcal{P}^+=\{(\mu _1 ,\mu _2,\mu _3)\in \mathbb{C}^3| \mu_1\mu_2 \mu _3=1\}$, given
by the equations:\\
$M(x_*)y=\mu_1y,\ y:=\left(\begin{array}{c} y_1\\ y_2\\ y_3\end{array}\right)$,
$zM(x_*)=\mu_3z,\ z:=(z_1,z_2,z_3)$, $\sum y_jz_j=0$.\\
Indeed, $\mathbb{C}y$ and the  kernel of $z\in (\mathbb{C}^3)^*$ are the $M(x_*)$-invariant spaces 
$L_1\subset L_2\subset \mathbb{C}^3$. Further $par:\mathcal{R}^+\rightarrow \mathcal{P}^+$ is the projection onto the last factor.
The fibre $\mathcal{R}^+(\mu_*)$ of $par$ above the point $(\mu_*)\in \mathcal{P}^+$ maps to the fibre
$\mathcal{R}(P)$ of $\mathcal{R}\rightarrow \mathcal{P}$ above the point
$P=(\lambda-\mu_1)(\lambda-\mu_2)(\lambda-\mu_3)$. 
\begin{proposition}
$res:\mathcal{R}^+(\mu_*)\rightarrow \mathcal{R}(P)$ is the minimal resolution of 
$\mathcal{R}(P)$. \end{proposition}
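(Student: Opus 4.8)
My plan is to establish, in turn, that $\mathcal{R}^+(\mu_*)$ is a smooth surface, that $res$ is proper, that $res$ is an isomorphism over the regular locus of $\mathcal{R}(P)$, and finally to compute the fibre of $res$ over the unique singular point and read off that its exceptional set is exactly the $A_1$-, resp. $A_2$-configuration of $(-2)$-curves; minimality is then formal. For smoothness I would use the explicit equations exhibited just before the proposition: fixing $(\mu_*)$ and working in the standard affine charts of $\mathbb{P}^2_{y}\times(\mathbb{P}^2)^*_{z}$ (normalising one coordinate of $y$ and one of $z$), the conditions $M(x_*)y=\mu_1 y$, $zM(x_*)=\mu_3 z$, $\sum_j y_jz_j=0$ become polynomial in $x_*$ and the affine flag coordinates, and one verifies by a Jacobian computation that they cut out a smooth surface. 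Conceptually this is the statement that prescribing an \emph{ordered} eigenvalue on each graded piece of an invariant flag rigidifies the incidence variety of (endomorphism, invariant flag); it is the surface incarnation of the Grothendieck--Springer picture, and it is what forces the total space to stay smooth over the confluent loci of $\mathcal{P}^+$.

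Next, $res$ is projective over $\mathcal{R}(P)$, hence proper. Over the locus where $P$ has simple roots the matrix $M(x_*)$ is regular semisimple, so the $\mu_1$-eigenline and the span of the $\mu_1,\mu_2$-eigenlines are uniquely determined and the admissible flag $(L_1,L_2)$ is unique; a short case analysis gives the same at the remaining smooth points of $\mathcal{R}(P)$, where $M(x_*)$ is regular (one Jordan block per eigenvalue), so $res$ is bijective over the whole regular locus. Since $\mathcal{R}(P)$ is normal and $res$ is proper and generically one-to-one, Zariski's Main Theorem yields that $res$ is birational and an isomorphism over the regular locus.

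The crux is the fibre of $res$ over the singular point $x_*=(-a^{-1},a,-a^{-1},a)$. In the $A_1$ case $M(x_*)$ is diagonalisable with a two-dimensional $a$-eigenspace $E_a$ and a one-dimensional $a^{-2}$-eigenspace; for each admissible ordering of $(\mu_1,\mu_2,\mu_3)$ precisely one step of the flag is free to move inside $E_a$ while the rest is then determined, so the fibre is $\mathbb{P}(E_a)\cong\mathbb{P}^1$, a single $(-2)$-curve. In the $A_2$ case $M(x_*)$ has the single eigenvalue $a$ with Jordan type $(2,1)$; choosing a Jordan basis $e_1,e_2,e_3$ with $(M-a)e_2=e_1$ and $(M-a)e_1=(M-a)e_3=0$, the invariant lines are the lines of $E_a=\langle e_1,e_3\rangle$, the invariant planes form a dual $\mathbb{P}^1$, and every invariant plane contains $W=\langle e_1\rangle=\mathrm{im}(M-a)$. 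The condition $L_1\subset L_2$ then breaks the fibre into the component $\{L_1=W,\ L_2\ \text{arbitrary}\}\cong\mathbb{P}^1$ and the component $\{L_2=E_a,\ L_1\subset E_a\}\cong\mathbb{P}^1$, meeting transversally in the single point $(W,E_a)$; this is the $A_2$ chain of two $(-2)$-curves.

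Minimality then follows formally: $res$ is a resolution (smooth source, proper birational), so it factors through the minimal resolution $\rho:\widetilde{\mathcal{R}(P)}\to\mathcal{R}(P)$ as $res=\rho\circ\pi$ with $\pi$ a birational morphism of smooth surfaces, i.e. a composition of point blow-ups, each of which would introduce an extra $(-1)$-curve in the fibre over the singular point. Since the fibre computed above has exactly one irreducible component in the $A_1$ case and exactly two in the $A_2$ case --- the same numbers as for the minimal resolution of these rational double points --- no blow-up can occur, so $\pi$ is an isomorphism and $res$ is the minimal resolution. I expect the main obstacle to be the smoothness of $\mathcal{R}^+(\mu_*)$ together with the transversality in the $A_2$ fibre: one must check that the incidence conditions on the flag are reduced and that the two $\mathbb{P}^1$'s genuinely meet as an $A_2$ configuration (transversally, in one point) rather than with a tangency or a non-reduced intersection. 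The regular-locus isomorphism and the final factorisation argument are then routine.
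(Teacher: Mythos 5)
Your proposal is correct and takes essentially the same route as the paper: the paper's proof is precisely the ``straightforward computation'' you outline --- a one-point fibre over each regular point, a single $\mathbb{P}^1$ (lines in the two-dimensional $a$-eigenspace) over the $A_1$ point, and a pair of transversally intersecting projective lines coming from the Jordan type $(2,1)$ over the $A_2$ point --- together with the smoothness of $\mathcal{R}^+(\mu_*)$. Your extra scaffolding (properness, Zariski's Main Theorem over the regular locus, factorisation through the minimal resolution and counting exceptional components) merely makes explicit the formal deduction the paper leaves to the reader.
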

A straightforward computation proves this statement. In particular, the fibre of $res$ above a non singular point
is just one point since there is only one level structure possible. If two of the $\mu_*$ are equal, then the preimage  under $res$ of the singular point  is a $\mathbb{P}^1$, consisting of the lines 
$\mathbb{C}y$ in the two-dimensional eigenspace for $a$ and  the kernel of $z$ is this two-dimensional eigenspace.   If the three $\mu_*$ are equal, then  the preimage  under $res$ of the singular point is {\it a pair of intersecting projective lines}. In this case the Jordan form of $M(x_*)$ has two blocks, $\mathbb{C}y$ is a line in the two-dimensional eigenspace of $\mathbb{C}^3$, $\mathbb{C}z$ is a line in the two-dimensional eigenspace of the dual $(\mathbb{C}^3)^*$ and $\sum y_jz_j=0$.
\begin{proposition} The natural maps ${\bf S}\rightarrow \mathcal{R}\times T$ and 
${\bf S}^+\rightarrow \mathcal{R}^+\times T$ (with $T=\mathbb{C}$) are bijections. \end{proposition}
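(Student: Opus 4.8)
The plan is to recognize this statement as an instance of the analytic (Riemann--Hilbert) classification of meromorphic connections on $\mathbb{P}^1$ with exactly two singular points, one regular singular ($z=0$) and one irregular ($z=\infty$), following the general theory of \cite{vdP-Si} in the form already used in \cite{vdP-Sa}. The essential content is that a module $M\in{\bf S}$ is reconstructible, up to isomorphism over $\mathbb{C}(z)$, from three pieces of data: the formal type at $\infty$, the Stokes data at $\infty$, and the regular singular germ at $0$; and that these are precisely encoded by $(x_*,t)\in\mathcal{R}\times T$. So the proof splits into recovering $t$, classifying the germ at $\infty$, classifying the germ at $0$, and gluing.

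First I would recover $t$. The generalized eigenvalues $q_0,q_1,q_2$ are intrinsic invariants of the formal module $\mathbb{C}((z^{1/3}))\otimes M$ at $\infty$; with $z$ normalized as in the definition of ${\bf S}$, the coefficient of $z^{1/3}$ reads off $t$, so the $T$-component of the map is well defined and determined by $M$. Fixing $t$, the formal structure at $\infty$ is rigid (diagonal form $\mathrm{diag}(q_0,q_1,q_2)$ in the basis $e_0,e_1,e_2$), and the full analytic germ at $\infty$ is classified by the formal monodromy together with the Stokes matrices attached to the six singular directions of the table. Here one must check that this data is faithfully and freely parametrized by $x_*\in\mathbb{A}^4$: the normalization fixing the formal monodromy to the cyclic permutation matrix (whose centralizer among diagonal gauge transformations is only the scalars, which act trivially by conjugation on the Stokes matrices) removes all residual gauge, while the two Stokes matrices for the directions outside $[0,2\pi)$ are determined by the remaining four through the relations among Stokes matrices under the period-$3\pi$ symmetry, so that the displayed product is exactly the topological monodromy $M(x_*)$.

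Next I would treat the germ at $0$. Since $0$ is regular singular, $\mathbb{C}((z))\otimes M$ is determined by its (formal $=$ topological) monodromy; over $\mathbb{C}((z))$ integer shifts of the exponents are realized by $z$-scalings, so the exponents are not independent data and the germ at $0$ is fixed by the conjugacy class of the topological monodromy, which equals $M(x_*)$. Because $\pi_1(\mathbb{C}^*)=\mathbb{Z}$, a local system on $\mathbb{P}^1\setminus\{0,\infty\}$ is a single monodromy matrix, and the two germs glue along it into a global module over $\mathbb{C}(z)$. Injectivity of ${\bf S}\to\mathcal{R}\times T$ is then the uniqueness half of the classification, and surjectivity is the realization half: every $(x_*,t)$ is realized by some connection, and one checks that $\Lambda^3 M$ is automatically trivial, since $q_0+q_1+q_2=0$ and $\det M(x_*)=1$ (the constant term of the characteristic polynomial is $-1$); hence no admissibility constraint excludes any $x_*$ and the map lands onto all of $\mathbb{A}^4\times T$.

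Finally, for the decorated map ${\bf S}^+\to\mathcal{R}^+\times T$ I would invoke the dictionary between differential submodules and invariant subspaces. A level structure on $M$ is a flag $M_1\subset M_2\subset\mathbb{C}((z))\otimes M$ of differential submodules; since the ambient module is regular singular, its lattice of differential submodules is isomorphic to the lattice of subrepresentations of its monodromy $M(x_*)$, so such a flag is the same datum as an $M(x_*)$-invariant flag $L_1\subset L_2\subset\mathbb{C}^3$, which is exactly the level structure defining $\mathcal{R}^+$. Thus the first bijection lifts compatibly to the second. I expect the main obstacle to be the analytic input of the second step, namely verifying that the six Stokes matrices collapse cleanly to the four coordinates $x_*$ with product $M(x_*)$ and that the realization functor produces a genuine module in ${\bf S}$ for every $x_*$; the degenerate (resonant, repeated-eigenvalue) cases demand extra care, although the submodule/invariant-subspace correspondence survives there since it is a purely representation-theoretic statement about the subrepresentation lattice.
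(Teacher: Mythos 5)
Your proposal is correct and takes essentially the same route as the paper: the paper's own proof handles the first bijection by citing the general classification theorem (\cite{vdP-Sa}, Theorem 1.7) --- whose content is precisely the formal-data-plus-Stokes-matrices-plus-gluing argument you sketch --- and handles the second bijection by the same dictionary between differential submodules of the regular singular germ at $z=0$ and monodromy-invariant flags $L_1\subset L_2\subset\mathbb{C}^3$ that you invoke. The only difference is that you unwind the citation into its underlying proof (including the useful checks that the scalar centralizer kills residual gauge freedom and that $\Lambda^3M$ is automatically trivial), where the paper simply quotes the result.
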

\begin{proof} In the first case one applies [vdP-Saito], Theorem 1.7. The second case follows from the observation that the level structure $M_1\subset M_2\subset \mathbb{C}((z))\otimes M$ induces subspaces $L_1\subset L_2\subset \mathbb{C}^3$ of dimensions 1 and 2, invariant under the topological monodromy, 
and visa versa. \end{proof}

 \noindent {\it The Noumi--Yamada moduli space $\mathcal{N}^+(\epsilon_*)$}.\\
 $\epsilon_*$ denotes a triple $(\epsilon_1,\epsilon_2,\epsilon_3)$ with $\sum \epsilon_j=0$.
The set ${\bf S}^+(\epsilon_*)$ consists of the tuples
 $(M, M_1\subset M_2)\in {\bf S}^+$ such that $M_1=\mathbb{C}((z))b_1$ with
 $\delta_M(b_1)= \epsilon_1b_1$ and $M_2/M_1=\mathbb{C}((z))b_2$ with
 $\delta_M(b_2)=\epsilon_2b_2$. {\it Let $\mathcal{V}$ denote the free bundle on $\mathbb{P}^1$ of rank 3}. 
 
 The points of the moduli space   $\mathcal{N}^+(\epsilon_*)$  are the isomorphy classes of connections  $D:=\nabla_{z\frac{d}{dz}}:\mathcal{V}\rightarrow O([\infty])\otimes  \mathcal{V}$ with a level structure consisting of 
 $D$-invariant submodules $V_1\subset V_2 \subset \widehat{\mathcal{V}}_0$ of rank 1 and 2 such that 
 $\widehat{\mathcal{V}}_0/V_j, \ j=1,2$ have no torsion and such that  there is a tuple $(M, M_1\subset M_2)\in {\bf S}^+(\epsilon_*)$ with $M$ is the generic fibre of $\mathcal{V}$, $\mathbb{C}((z))\otimes V_j=M_j,\ j=1,2$ and  
 $\widehat{\mathcal{V}}_\infty$ is the lattice $\Lambda _0\subset \mathbb{C}((z^{-1}))\otimes M$.
  
 \begin{proposition}  $\mathcal{N}^+(\epsilon_*)$ is the affine space $\mathbb{A}^3$ with coordinates
 $f_0,f_1,f_2$, $t=f_0+f_1+f_2$ and the connection is represented by
 $z\frac{d}{dz}+\left(\begin{array}{ccc}\epsilon _1 &f_1 & 1 \\ z&\epsilon _2 & f_2\\ f_0z&z &\epsilon _3 \end{array}\right)$.
 \end{proposition}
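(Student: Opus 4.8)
The plan is to establish Proposition 3.4 by showing that the data defining a point of $\mathcal{N}^+(\epsilon_*)$ forces the connection matrix into precisely the stated shape, with the three free entries $f_0,f_1,f_2$ as coordinates, and conversely that every such matrix genuinely defines a point of $\mathcal{N}^+(\epsilon_*)$. First I would exploit the prescribed lattice at infinity. By hypothesis $\widehat{\mathcal{V}}_\infty$ is the lattice $\Lambda_0 = \langle h_0,h_1,h_2\rangle$, and the computation already recorded in the excerpt shows that $D$ acts on the basis $h_0,h_1,h_2$ by $z\frac{d}{dz}+\left(\begin{smallmatrix} 0 & z & t/3 \\ t/3 & 1/3 & 1 \\ z & tz/3 & -1/3\end{smallmatrix}\right)$. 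Since $\mathcal{V}$ is the free bundle $O^3$ and $D:\mathcal{V}\rightarrow O([\infty])\otimes\mathcal{V}$, the matrix of $D$ with respect to a global frame is polynomial in $z$ of degree at most $1$; the constraint that it be gauge-equivalent to the above irregular type at $z=\infty$ pins down the degree-one parts of each entry. I expect this to force the matrix into the form $z\frac{d}{dz}+\left(\begin{smallmatrix}\epsilon_1 & f_1 & 1 \\ z & \epsilon_2 & f_2 \\ f_0 z & z & \epsilon_3\end{smallmatrix}\right)$ up to the remaining freedom, with the entries $1$, the two $z$'s in the subdiagonal, and the $z$ in the lower-left being the rigid pieces dictated by the generalized eigenvalues $q_0,q_1,q_2$, while $f_0,f_1,f_2$ remain as the undetermined constants.

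The diagonal entries $\epsilon_1,\epsilon_2,\epsilon_3$ are then fixed by the condition at $z=0$: the level structure $V_1\subset V_2\subset\widehat{\mathcal{V}}_0$ with $\delta_M(b_1)=\epsilon_1 b_1$ and $\delta_M(b_2)\equiv\epsilon_2 b_2 \bmod M_1$ (from the definition of ${\bf S}^+(\epsilon_*)$) forces the regular-singular exponents at $z=0$ to be $\epsilon_1,\epsilon_2,\epsilon_3$, and since $\Lambda^3 M$ is trivial we have $\epsilon_1+\epsilon_2+\epsilon_3=0$, consistent with $\sum\epsilon_j=0$. I would verify that the displayed matrix indeed has the $V_1\subset V_2$ flag realized over $\mathbb{C}[[z]]$ with the correct exponents, so that the $\epsilon_j$ appearing on the diagonal are exactly the prescribed local data and impose no further relation on $f_0,f_1,f_2$.

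The remaining task is the identification of the gauge group and the verification that no residual coordinate freedom survives beyond $(f_0,f_1,f_2)$. The automorphisms of the free bundle $\mathcal{V}=O^3$ respecting the flag at $z=0$ and the lattice $\Lambda_0$ at $z=\infty$ form the relevant group, and I would check that the normalizations already built into the basis $h_0,h_1,h_2$ and the flag $b_1,b_2$ rigidify the frame completely, so that the isomorphy class of $(D,V_1\subset V_2)$ is determined by the triple $(f_0,f_1,f_2)\in\mathbb{A}^3$ and every triple occurs. Setting $t=f_0+f_1+f_2$ then matches the normalization $t=f_0+f_1+f_2$ imposed earlier in this section, so that the off-diagonal entry $t/3$ in the $\Lambda_0$-frame is consistent.

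I expect the main obstacle to be the gauge-theoretic rigidity step: confirming that the combined constraints at $0$ and $\infty$ leave exactly a three-dimensional family with \emph{no} extra parabolic or continuous automorphisms to quotient by, rather than a naive count that could either over- or under-determine the moduli. Concretely, one must verify that the group $U\in\mathrm{GL}(3,\mathbb{C}[[z]])$ of the special form $1+U_1 z+U_2 z^2+\cdots$ highlighted in the excerpt (which upper-triangularizes the connection while preserving the $\epsilon_*$) does not act nontrivially on $(f_0,f_1,f_2)$, so that the quotient is genuinely $\mathbb{A}^3$; the rest is a direct matching of the two standard-frame computations already carried out above.
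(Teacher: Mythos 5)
Your overall route coincides with the paper's: put the connection into a normal form using the flag at $z=0$ together with the lattice condition at $z=\infty$, and then show that no gauge freedom beyond $(f_0,f_1,f_2)$ survives. However, the step ``the lattice at $\infty$ pins down the degree-one parts of each entry'' is not correct as stated, and it hides where the real work lies. Writing $D=z\frac{d}{dz}+A_0+A_1z$ (degree $\leq 1$ in $z$ because $\mathcal{V}$ is free and $\nabla$ maps into $O([\infty])\otimes\mathcal{V}$), the condition $\widehat{\mathcal{V}}_\infty\cong\Lambda_0$ says only that some $U=U_0(1+U_{-1}z^{-1}+\cdots)\in{\rm GL}_3(\mathbb{C}[[z^{-1}]])$ conjugates $D$ into the $\Lambda_0$-form, i.e.
\[
A_1=U_0^{-1}\left(\begin{array}{ccc}0&1&0\\ 0&0&0\\ 1&\tfrac{t}{3}&0\end{array}\right)U_0,
\qquad
A_0=U_0^{-1}\left(\begin{array}{ccc}0&0&\tfrac{t}{3}\\ \tfrac{t}{3}&\tfrac{1}{3}&1\\ 0&0&-\tfrac{1}{3}\end{array}\right)U_0+[A_1,U_{-1}].
\]
So the condition at $\infty$ determines $A_1$ only up to conjugation (it is a regular nilpotent matrix) and couples $A_0$ to the same unknowns $U_0,U_{-1}$. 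Bringing $A_1$ to the shape with $1$'s on the subdiagonal consumes constant gauge freedom, and this must be done \emph{while} keeping $A_0$ upper triangular with diagonal $\epsilon_*$, which is what the flag at $0$ provides. The compatibility of these two normalizations is the actual content of the proposition; the paper establishes it by explicitly solving the displayed equations (its MAPLE step), with the accompanying uniqueness statements ($U_0$ unique up to a scalar, $U_{-1}$ up to adding $V$ with $[A_1,V]=0$) giving well-definedness. Your plan performs the two normalizations in sequence without addressing their interaction, and ``I expect this to force the matrix into the form'' is precisely the assertion requiring proof.

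More seriously, your closing paragraph aims the rigidity check at the wrong group. The formal transformations $1+U_1z+U_2z^2+\cdots\in{\rm GL}_3(\mathbb{C}[[z]])$ are not automorphisms of a tuple $(\mathcal{V},D,V_1,V_2)$: they are not automorphisms of the free bundle at all (they are formal at $z=0$ and are used in the paper only to detect the level structure), so they do not act on $\mathcal{N}^+(\epsilon_*)$, and there is nothing to verify about their effect on $(f_0,f_1,f_2)$. The group one must quotient by is ${\rm Aut}(\mathcal{V})={\rm GL}_3(\mathbb{C})$, which in a basis adapted to the flag (such a basis exists because $\widehat{\mathcal{V}}_0/V_j$ are torsion free) reduces to the constant Borel group $B$ of upper triangular matrices --- the group you name correctly one paragraph earlier and then abandon. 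The statement to prove is that each $B$-orbit contains exactly one matrix of Noumi--Yamada shape, and this is not automatic: the diagonal torus in $B$ rescales the three normalized entries (the two subdiagonal $z$'s and the corner $1$), and only their product is $B$-invariant; that this product equals $1$, so that all three can simultaneously be set to $1$, is forced by the normalization of the eigenvalues $q_0,q_1,q_2$ at $\infty$. Finally, surjectivity (``every triple occurs'') is asserted but never checked: for arbitrary $(f_0,f_1,f_2)$ one still has to verify that the Noumi--Yamada operator admits the flag $V_1\subset V_2$ at $z=0$ with exponents $\epsilon_*$ (this is the triangularization feature $U=1+U_1z+\cdots$ quoted in the paper) and satisfies $\widehat{\mathcal{V}}_\infty\cong\Lambda_0$.
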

 \begin{proof} The level structure provides $H^0(\mathcal{V})$ with a basis $e_1,e_2,e_3$ such that 
 $D=z\frac{d}{dz}+A_0+A_1z$ with traceless constant matrices $A_0,A_1$ and 
  $A_0=\left(\begin{array}{ccc}\epsilon _1&*&*\\ 0&\epsilon _2&*\\ 0&0&\epsilon _3\end{array}\right)$.
This is unique up to the action of $B=\{\left(\begin{array}{ccc}*&*&*\\ 0&*&*\\ 0&0&*\end{array}\right)\}$.
 The lattice condition at $z=\infty$ is equivalent to
 $U\{z\frac{d}{dz}+A_0+A_1z\}U^{-1}=z\frac{d}{dz}+\left(\begin{array}{ccc} 0&z&\frac{t}{3}\\ \frac{t}{3}&\frac{1}{3}&1\\ z&\frac{t}{3}z&-\frac{1}{3}\end{array}\right)$ for some $U=U_0(1+U_{-1}z^{-1}+\dots )\in {\rm GL}_3(\mathbb{C}[[z^{-1}]])$.
This is equivalent to the equations 
$A_1= U_0^{-1}\left( \begin{array}{ccc} 0&1&0\\0&0&0\\1&\frac{t}{3}&0\end{array}\right)
   U_0$ and $ A_0=U_0^{-1} \left(\begin{array}{ccc} 0&0&\frac{t}{3}\\\frac{t}{3}&\frac{1}{3}&1\\ 0&0&-\frac{1}{3}\end{array}\right)  U_0 +[A_1,U_{-1}].$ A MAPLE computation produces matrices $U_0$ and $U_{-1}$ such that 
 $ A_0=\left(\begin{array}{ccc}\epsilon _1&*&*\\ 0&\epsilon _2&*\\
 0&0&\epsilon _3\end{array}\right)$  and $ A_1= \left(\begin{array}{ccc} 0&0&0\\ 1&0&0\\ *&1&0\end{array}\right) .$
 Moreover $U_0$ is unique up to 
multiplication by a non zero constant and $U_{-1}$ is unique up to adding a matrix $V$ with $[A_1,V]=0$.
Thus we found a representation of the connection in the `Noumi--Yamada form' and this form is unique 
with respect to the action of the Borel group $B$ on $\mathcal{V}$.
 Therefore the Noumi--Yamada form represents the moduli space  $\mathcal{N}^+(\epsilon_*)$. \end{proof}

The map $\mathcal{N}(\epsilon_*)\rightarrow {\bf S}^+(\epsilon_*)$ is injective and not bijective. This is due
to the choice of a free vector bundle $\mathcal{V}$ in the construction of  $\mathcal{N}(\epsilon_*)$.
The aim is to avoid this choice and to construct a smooth partial completion $\widehat{\mathcal{N}}(\epsilon_*)$ such that
 $\widehat{\mathcal{N}}(\epsilon_*)\rightarrow {\bf S}^+(\epsilon_*)$ is bijective. As in Section 2, this will imply
 that the extended Riemann--Hilbert map $\widehat{\mathcal{N}}(\epsilon_*)\rightarrow \mathcal{R}^+(\mu_*)\times T$ (with $\mu_j=e^{2\pi i \epsilon_j}$ for $j=1,2,3$) is an analytic isomorphism. Moreover
$\widehat{\mathcal{N}}(\epsilon_*)$ is the Okamoto--Painlev\'e space and $\mathcal{R}^+(\mu_*)$ is the space
of the initial conditions.\\

\noindent {\it Construction of  $\widehat{\mathcal{N}}(\epsilon_*)$}. \\
The points of $\widehat{\mathcal{N}}(\epsilon_*)$ are (the isomorphism classes of) the tuples 
$(\mathcal{V},D,L_1,L_2)$ with $D=\nabla_{z\frac{d}{dz}}:\mathcal{V}\rightarrow O([\infty])\otimes \mathcal{V}$
is a connection on a vector bundle $\mathcal{V}$ of rank 3. {\it We require the following}:
The connection $\widehat{\mathcal{V}}_\infty$ is isomorphic to $\Lambda_1$. In other terms
$\widehat{\mathcal{V}}_\infty$ has a basis over $\mathbb{C}[[z^{-1}]]$ for which $D$ has the form
$z\frac{d}{dz}+\left(\begin{array}{ccc} 0&1 &\frac{t}{3} \\ \frac{t}{3}z & -\frac{2}{3}& z \\ z &\frac{t}{3} &-\frac{1}{3} \end{array}\right)$. Further
$L_1=\mathbb{C}[[z]]Y$ is a subconnection of $\widehat{\mathcal{V}}_0$ such that $\widehat{\mathcal{V}}_0/L_1$ has no torsion and $DY=\epsilon _1Y$. Further
$L_2=\mathbb{C}[[z]]Z$ is a subconnection of $\widehat{\mathcal{V}}_0^*$, the dual of $\widehat{\mathcal{V}}_0$, such that $\widehat{\mathcal{V}}_0^*/L_2$ has no torsion
and $DZ=\epsilon _3Z$. Moreover $\Lambda ^3(\widehat{\mathcal{V}}_0)$ is trivial and  $L_2(L_1)=0$.\\

The map  $F:\widehat{\mathcal{N}}(\epsilon_*)\rightarrow {\bf S}^+(\epsilon_*)$,  sends 
$(\mathcal{V},D,L_1,L_2)$ to its generic fibre $M$ together with the level structure on $\mathbb{C}((z))\otimes M$ obtained from $L_1,L_2$. Conversely, for a given element $(M, M_1\subset M_2)\in {\bf S}^+(\epsilon_*)$
one defines the connection  $(\mathcal{V},D)$ with generic fibre $M$, by prescribing 
$\widehat{\mathcal{V}}_\infty \cong \Lambda _1$. The additional data $L_1,L_2$ imply that $\widehat{\mathcal{V}}_0$ is represented by 
\begin{small}$z\frac{d}{dz}+\left(\begin{array}{ccc}   \epsilon_1& *&* \\ 0 &\epsilon _2 &* \\ 0  & 0 & \epsilon _3\end{array}\right)$\end{small} with all $*\in \mathbb{C}[[z]]$. This implies that
{\it $F$ is bijective in the following cases}:\\
\indent $\mu_1,\mu _2,\mu _3$ are distinct;\\ 
\indent $\mu_1=\mu_2\neq \mu_3$ and $\epsilon_2-\epsilon_1\geq 0$; \\
\indent $\mu_1=\mu_3\neq \mu_2$ and $\epsilon_3-\epsilon_1\geq 0$;\\
 \indent $\mu_1\neq \mu_2=\mu_3$ and $\epsilon_3-\epsilon_2\geq 0$;\\
 \indent $\mu_1=\mu_2=\mu_3$ and $\epsilon_2-\epsilon_1, \epsilon_3-\epsilon_2\geq 0$. \\
{\it In the sequel we will only consider these cases}.\\

In order to give $\widehat{\mathcal{N}}(\epsilon_*)$ the structure of an algebraic variety we observe that $\mathcal{V}$ has degree -1 and type $O\oplus O\oplus O(-1)$ since
$(\mathcal{V},D)$ is irreducible. We identify $\mathcal{V}$ with $Oe_1\oplus Oe_2\oplus O(-[\infty])e_3$.
The matrix of $D$ with respect to the basis $e_1,e_2,e_3$ has trace zero and is denoted by
\begin{tiny} $\left(\begin{array}{ccc}a_0+a_1z &a_2+a_3z & a_4+a_5z+a_6z^2 \\ a_7+a_8z &a_9+a_{10}z &a_{11}+a_{12}z+a_{13}z^2 \\ a_{14}&a_{15} & a_{16}+a_{17}z \end{array}\right)$.\end{tiny}
The generator $Y=\sum _{n\geq 0}Y_nz^n$ of $L_1$ with $Y_n=Y_n(1)e_1+Y_n(2)e_2+Y_n(3)e_3$
is unique up to multiplication by a constant. The generator $Z=\sum _{n\geq 0}Z_nz^n$ of $L_2$ with 
$Z_n=Z_n(1)e_1^*+Z_n(2)e_2^*+Z_n(3)e_3^*$ is unique up to multiplication by a constant. The  
$Y_*(*),Z_*(*)$ are regarded as homogeneous coordinates.
  
The space $\mathcal{A}$ is defined by the indeterminates $a_*,Y_*(*),Z_*(*)$  and the relations induced by the above requirements. We note that for given $\epsilon_*$, {\it such that the above restrictions
are satisfied}, the 
$Y_n(*),Z_n(*)$ are for $n\geq 1$ eliminated by the relations. Thus $\mathcal{A}$ is an algebraic variety.

 The group $G$ of the automorphisms of $\mathcal{V}$ act upon $\mathcal{A}$. By construction, the set theoretic quotient $\mathcal{A}(\mathbb{C})/G$ coincides with ${\bf S}^+(\epsilon_*)$. Thus  the analytic map
$R:\mathcal{A}\rightarrow \mathcal{R}^+(\mu_*)\times T$, where $\mu_j=e^{2\pi i \epsilon_j}$ for $j=1,2,3$,   
is surjective and $R(\xi_1)=R(\xi_2)$ if and only if there is a $g\in G$ with $g\xi_1=\xi_2$.\\

\noindent  A long MAPLE session verifies: {\it $\mathcal{A}$ has a smooth geometric quotient by $G$}.\\ 
This quotient is by definition $\widehat{\mathcal{N}}(\epsilon_*)$ and the extended Riemann--Hilbert map $\widehat{\mathcal{N}}(\epsilon_*)\rightarrow \mathcal{R}^+(\mu_*)\times T$ is an analytic isomorphism.
 
\begin{small}

\end{small}

\end{document}